\crefname{hypothesis}{Hypothesis}{Hypotheses}
\title{Monotone inclusion methods for a class of second-order non-potential mean-field games\thanks{Submitted to the editors DATE.
\funding{S. Liu was supported by Air Force Office of Scientific Research (AFOSR) MURI Grant FA9550-18-502 and Office of Naval Research (ONR) Grant N00014-20-1-2787; Y. T. Chow was supported by the Regents Faculty Fellowship and Omnibus Research and Travel Grant from the University of California Riverside.}}}
\author{Levon Nurbekyan\thanks{Department of Mathematics, Emory University, Atlanta, GA 
  (\email{lnurbek@emory.edu}, \url{https://sites.google.com/view/lnurbek/home}).}
\and Siting Liu\thanks{Department of Mathematics, University of California Los Angeles, Los Angeles, CA 
  (\email{siting6@math.ucla.edu}, \url{https://sites.google.com/view/siting6ucla/home}).}
\and Yat Tin Chow\thanks{Department of Mathematics, University of California Riverside, Riverside, CA 
  (\email{yattinc@ucr.edu}, \url{https://ytchow.github.io}).}}
\begin{document}

\maketitle

\begin{abstract}
We propose a monotone splitting algorithm for solving a class of second-order non-potential mean-field games. Following~\cite{achdou10}, we introduce a finite-difference scheme and observe that the scheme represents first-order optimality conditions for a primal-dual pair of monotone inclusions. Based on this observation, we prove that the finite-difference system obtains a solution that can be provably recovered by an extension of the celebrated primal-dual hybrid gradient (PDHG) algorithm.
\end{abstract}

\begin{keywords}
mean-field games, non-potential, monotone inclusions, primal-dual methods, finite-differences
\end{keywords}

\begin{MSCcodes}
Primary, 35Q89, 65M06, 35A15, 49N80; Secondary, 35Q91, 35Q93, 91A16, 93A15, 93A16
\end{MSCcodes}

\section{Introduction}

The main goal of the paper is to introduce a new algorithm for computing the solutions of the following system of PDE
\begin{equation}\label{eq:mfg}
\begin{cases}
    -\partial_t \phi-\nu \Delta \phi +H(t,x,\nabla \phi,\rho)=f(t,x,\rho),\\
    \partial_t \rho-\nu \Delta \rho -\nabla \cdot \left(\rho \nabla_q H(t,x,\nabla \phi,\rho) \right)=0,\\
    \rho(0,x)=\rho_0(x),\quad \phi(T,x)=g(x,\rho(T,\cdot)).
\end{cases}
\end{equation}
Here, we assume periodic boundary conditions; that is, $(t,x) \in [0,T] \times \mathbb{T}^d$, where $\mathbb{T}^d=\mathbb{R}^d/\mathbb{Z}^d$ is the $d$-dimensional flat torus. Furthermore, $\nu \geq 0$ is the noise (viscosity) parameter, $H$ is the Hamiltonian, $f$ is the mean-field coupling function, and $g$ is the terminal cost function.

System~\cref{eq:mfg} characterizes an equilibrium configuration for a continuum of agents that play a non-cooperative differential game. Such games are called mean-field games (MFG) and were independently introduced in~\cite{LasryLions06a,LasryLions06b,LasryLions2007} and \cite{HCM06,HCM07}. In this context, $\rho(t,x)$ and $\phi(t,x)$ are, respectively, the distribution and optimal cost of the agents at time $t$ and location $x$.

We introduce a numerical method for~\eqref{eq:mfg} in the spirit of Benamou-Brenier technique for solving optimal transportation and MFG systems~\cite{BenamouBrenier2000,bencar'15,bencarsan'17,silva18,silva19}. In these works, the authors observe that when $H$ is separable; that is,  $H(x,\rho,q)=H_0(x,q)-f(x,\rho)$, and suitable convexity, monotonicity, and structural assumptions are met,~\eqref{eq:mfg} can be seen as a first-order optimality condition for a convex-concave saddle-point problem. Such MFG are called \textit{variational} or \textit{potential}. Hence, one can use various convex optimization algorithms for computing the solutions of~\eqref{eq:mfg} such as the alternating direction method of multipliers (ADMM)~\cite{BenamouBrenier2000,bencar'15,bencarsan'17} and primal-dual hybrid gradient (PDHG)~\cite{silva18,silva19}.

Here, we go beyond the potential and separable settings and provide a version of PDHG algorithm to solve~\eqref{eq:mfg}. Our essential observation is that under the so-called Lasry-Lions monotonicity condition~\eqref{eq:mfg} can be seen as a primal-dual pair of monotone inclusions where the monotone maps are not subdifferential maps in general. We then solve~\eqref{eq:mfg} by a PDHG variant for monotone inclusions~\cite{vu13}. The possibility of solving non-potential MFG using monotone inclusion variants of PDHG was hinted on in~\cite{liu2020splitting} for nonlocal MFG systems.

For related work on non-separable MFG systems we refer to~\cite{Lauriere2023,camilli2023convergence} for policy iteration and Newton's methods, and~\cite{Almulla2017two,gomes20hessian,Gomes2021numerical} for monotone flows. In~\cite{Lauriere2023,camilli2023convergence}, the analyses strongly rely on the ellipticity and do not handle first-order systems ($\nu=0$) and non-smooth mean-field interactions ($\epsilon=0$ in~\cref{eq:H}). In contrast, we expect our methods to extend to these singular cases mutatis mutandis due to the variational nature of our techniques. Numerical experiments in~\cref{subsec:viscosity=0} and~\cref{subsec:cong=0} support our claim.

Works~\cite{Almulla2017two,gomes20hessian,Gomes2021numerical} address finite-state and stationary problems. Additionally, the algorithms in these works are forward only and do not come with convergence guarantees for the time-discretizations of the monotone flows.

\section{Numerical analysis}

\subsection{A finite-difference scheme}

We follow~\cite{achdou10,silva19} for introducing a semi-implicit scheme for~\eqref{eq:mfg}. For simplicity, we assume that $d=2$. Let $h,\Delta t >0$ be such that $N_h=\frac{1}{h} \in \mathbb{N}$ and $N_T=\frac{T}{\Delta t} \in \mathbb{N}$. We then introduce uniform space-time grids
\begin{equation*}
    x_{ij}=(ih, jh),\quad t_k=k \Delta t,\quad 0\leq i,j \leq N_h-1,~0\leq k \leq N_T.
\end{equation*}
To enforce periodicity, we assume that $x_{ij}=x_{i'j'}$ whenever $i\equiv i'~(\operatorname{mod} N_h)$ and $j\equiv j'~(\operatorname{mod} N_h)$. Next, for a grid-function $f_{ij}^k=f(x_{ij},t_k)$ we denote by
\begin{equation*}
\begin{split}
    (D_1 f^k)_{ij}=&\frac{f^k_{(i+1)j}-f^k_{ij}}{h},\quad (D_2 f^k)_{ij}=\frac{f^k_{i(j+1)}-f^k_{ij}}{h},\\
    &\\
    [D_h f^k]_{ij}=&\left((D_1 f^k)_{ij},(D_1 f^k)_{(i-1)j},(D_2 f^k)_{ij},(D_2 f^k)_{i(j-1)} \right),\\
    &\\
    (\Delta_h f^k)_{ij}=&\frac{f^{k}_{(i-1)j}+f^{k}_{(i+1)j}+f^{k}_{i(j-1)}+f^{k}_{i(j+1)}-4f^{k}_{ij}}{4h^2 },\\
    &\\
    (D_t f_{ij})^k=&\frac{f_{ij}^{k+1}-f_{ij}^k}{\Delta t}.
\end{split}
\end{equation*}
We then introduce a discretization of the Hamiltonian $H_h(t,x,q_1,q_2,q_3,q_4,\rho)$ that satisfies the following conditions
\begin{itemize}
    \item \textit{Monotonicity:} $H_h$ is nonincreasing in $q_1,q_3$ and nondecreasing in $q_2,q_4$.
    \item \textit{Consistency:} $H_h(t,x,q_1,q_1,q_2,q_2,\rho)=H(t,x,q,\rho)$ for all $t\in [0,T]$, $x\in \mathbb{T}^2$, $\rho\geq 0$, and $q=(q_1,q_2) \in \mathbb{R}^2$.
    \item \textit{Differentiability:} $\nabla_q H_h$ is continuous.
    \item \textit{Convexity:} $(q_1,q_2,q_3,q_4) \mapsto H_h(t,x,q_1,q_2,q_3,q_4,\rho)$ is convex.
    \item \textit{Lasry-Lions monotonicity:} A structural condition that yields existence and uniqueness of solutions of~\eqref{eq:mfg} is the so called Lasry-Lions monotonicity; that is,
    \begin{equation}\label{eq:LL}
        \begin{pmatrix}
            -\partial_\rho H(t,x,q,\rho) & \frac{\rho \nabla_q^\top H(t,x,q,\rho)}{2}\\
            \frac{\rho \nabla_q H(t,x,q,\rho)}{2} &\rho \nabla^2_q H(t,x,q,\rho)
        \end{pmatrix} \geq 0,
    \end{equation}
    for all $t\in [0,T]$, $(x,q)\in \mathbb{T}^2\times \mathbb{R}^2$, $\rho\geq 0$. We require the same condition on the discretized Hamiltonian; that is,
    \begin{equation}\label{eq:LL_discrete}
        \begin{pmatrix}
            -\partial_\rho H_h(t,x,q_1,q_2,q_3,q_4,\rho) & \frac{\rho \nabla_q^\top H(t,x,q_1,q_2,q_3,q_4,\rho)}{2}\\
            \frac{\rho \nabla_q H(t,x,q_1,q_2,q_3,q_4,\rho)}{2} &\rho \nabla^2_q H(t,x,q_1,q_2,q_3,q_4,\rho)
        \end{pmatrix} \geq 0,
    \end{equation}
    for all $t\in [0,T]$, $(x,q_1,q_2,q_3,q_4)\in \mathbb{T}^2\times \mathbb{R}^4$, $\rho\geq 0$.
\end{itemize}
With these ingredients at hand, the discretization of~\eqref{eq:mfg} introduced in~\cite{achdou10} reads as follows:
\begin{equation}\label{eq:mfg_discrete}
\begin{cases}
    -(D_t \phi_{ij})^k-\nu (\Delta_h \phi^k)_{ij}+H_h(t_k,x_{ij},[D_h \phi^k]_{ij},\rho_{ij}^{k+1})=f(t_k,x_{ij},\rho^{k+1}_{ij})\\
    (D_t \rho_{ij})^k-\nu (\Delta_h \rho^{k+1})_{ij}-\mathcal{B}_{ij}(t_k,\phi^k,\rho^{k+1},\rho^{k+1})=0,\\
    \rho^0_{ij}=(\rho_0)_{ij},~ \phi^{N_T}_{ij}=g(x_{ij},\rho^{N_T}_{ij}),\quad 0\leq i,j \leq N_h-1,~ 0\leq k \leq N_T-1,
\end{cases}
\end{equation}
where
\begin{equation*}
    (\rho_0)_{ij}=\frac{1}{h^2} \int\limits_{\|x-x_{ij}\|_\infty \leq h/2} \rho_0(x)dx.
\end{equation*}
and
\begin{equation*}
\begin{split}
    &\mathcal{B}_{ij}(t,\phi,\rho,\eta)\\
    =&\frac{1}{h}\left(\rho_{ij}\partial_{q_1}H_h(t,x_{ij},[D_h \phi]_{ij},\eta_{ij})-\rho_{(i-1)j}\partial_{q_1}H_h(t,x_{(i-1)j},[D_h \phi]_{(i-1)j},\eta_{(i-1)j}) \right)\\
    +&\frac{1}{h}\left(\rho_{(i+1)j}\partial_{q_2}H_h(t,x_{(i+1)j},[D_h \phi]_{(i+1)j},\eta_{(i+1)j})-\rho_{ij}\partial_{q_2}H_h(t,x_{ij},[D_h \phi]_{ij},\eta_{ij}) \right)\\
    +&\frac{1}{h}\left(\rho_{ij}\partial_{q_3}H_h(t,x_{ij},[D_h \phi]_{ij},\eta_{ij})-\rho_{i(j-1)}\partial_{q_3}H_h(t,x_{i(j-1)},[D_h \phi]_{i(j-1)},\eta_{i(j-1)}) \right)\\
    +&\frac{1}{h}\left(\rho_{i(j+1)}\partial_{q_4}H_h(t,x_{i(j+1)},[D_h \phi]_{i(j+1)},\eta_{i(j+1)})-\rho_{ij}\partial_{q_4}H_h(t,x_{ij},[D_h \phi]_{ij},\eta_{ij}) \right).
\end{split}
\end{equation*}

\subsection{A discrete energy}

Our goal is to formulate the discrete system~\eqref{eq:mfg_discrete} as a primal-dual pair of monotone inclusions. Following~\cite{silva19}, we denote by $\mathcal{M}=\mathbb{R}^{(N_T+1)\times N_h\times N_h}$, $\mathcal{W}=(\mathbb{R}^4)^{N_T\times N_h\times N_h}$, $\mathcal{U}=\mathbb{R}^{N_T \times N_h \times N_h}$ and introduce operators $A:\mathcal{M} \to \mathcal{U}$, $B:\mathcal{W} \to \mathcal{U}$ as follows:
\begin{equation*}
    \begin{split}
        (A\rho)^k_{ij}=&(D_t \rho_{ij})^k-\nu (\Delta \rho^{k+1})_{ij},\\
        (B w)^k_{ij}=&(D_1 w^{k,1})_{(i-1)j}+(D_1 w^{k,2})_{ij}+(D_2 w^{k,3})_{i(j-1)}+(D_2 w^{k,4})_{ij},
    \end{split}
\end{equation*}
for $0\leq i,j \leq N_h-1$ and $0\leq k \leq N_T-1$. Direct calculations~\cite{achdou10,silva19} show that
\begin{equation*}
\begin{split}
    (B^*\phi)^k_{ij}=&-[D_h \phi^k]_{ij},\quad 0\leq k \leq N_T-1,\\
    (A^*\phi)^0_{ij}=&-\frac{1}{\Delta t} \phi^0_{ij},\\
    (A^*\phi)^k_{ij}=&-(D_t \phi_{ij})^{k-1}-\nu (\Delta_h \phi^{k-1})_{ij},\quad 1\leq k \leq N_T-1,\\
    (A^*\phi)^{N_T}_{ij}=&\frac{\phi^{N_T-1}_{ij}}{\Delta t}-\nu (\Delta_h \phi^{N_T-1})_{ij}.
\end{split}
\end{equation*}
Furthermore, we denote by $L_h$ the Legendre dual of $H_h$ defined as
\begin{equation}\label{eq:H_h^*}
    L_h(t,x,v_1,v_2,v_3,v_4,\eta)=\sup_{q_1,q_2,q_3,q_4}\left( - \sum_{i=1}^4 q_i v_i-H_h(t,x,q_1,q_2,q_3,q_4,\eta) \right),
\end{equation}
and denote by $E_h$ the \textit{perspective function}~\cite{combettes18perpsective} of $L_h$; that is, 
\begin{equation}\label{eq:E_h}
    E_h(t,x,\rho,w,\eta)=\begin{cases}
        \rho L_h \left(t, x,\frac{w}{\rho},\eta\right),\quad \rho>0,\\
        (\operatorname{rec}L_h(t,x,\cdot,\eta))(w),\quad \rho=0,\\
        \infty,\quad \text{otherwise},
    \end{cases}
\end{equation}
where the \textit{recession function} $\operatorname{rec}L_h(t,x,\cdot,\eta)$ is defined as
\begin{equation*}
    (\operatorname{rec}L_h(t,x,\cdot,\eta))(w)=\lim\limits_{s \to \infty}\frac{L_h(t,x,v_0+s w,\eta)}{s}.
\end{equation*}
Above, $v_0 \in \mathbb{R}^4$ is an arbitrary point such that $L_h(t,x,v_0,\eta)<\infty$.

Next, we define $J:\mathcal{M} \times \mathcal{W} \times \mathcal{M} \to \mathbb{R}\cup\{\infty\}$ as follows:
\begin{equation}\label{eq:J}
\begin{split}
    J(\rho,w,\eta)=&\sum\limits_{\substack{0\leq i,j \leq N_h-1\\ 0\leq k\leq N_T-1}} E_h(t_k,x_{ij},\rho^{k+1}_{ij},w^k_{ij},\eta^{k+1}_{ij})+\sum\limits_{\substack{0\leq i,j \leq N_h-1\\ 0\leq k\leq N_T-1}} F(t_k,x_{ij},\rho^{k+1}_{ij})\\
    &+\frac{1}{\Delta t}\sum\limits_{0\leq i,j \leq N_h-1} G(x_{ij}, \rho_{ij}^{N_T})+\mathbf{1}_{\rho^0=\rho_0},
\end{split}
\end{equation}
where $\partial_\rho F(t,x,\rho)=f(t,x,\rho)$, and $\partial_\rho G(x,\rho)=g(x,\rho)$. Additionally, $\mathbf{1}_E$ is the convex characteristic function defined as 
\begin{equation*}
\mathbf{1}_E(y)=\begin{cases}
    0,\quad y \in E,\\
    \infty,\quad y \notin E.
\end{cases}    
\end{equation*}

\begin{remark}\label{rmk:perspective_function}
A remarkable property of perspective functions is that $E_h(t,x,\cdot,\cdot,\eta)$ is a convex lower semicontinuous function~\cite{combettes18perpsective}.    
\end{remark}

\subsection{A congestion model}

For concreteness, we consider the MFG model with congestion discussed in~\cite{achdou18}. More specifically, assume that
\begin{equation}\label{eq:H}
    H(t,x,q,\eta)=\frac{|q|^\beta}{\beta(\eta+\epsilon)^\alpha},
\end{equation}
for some $\epsilon>0$, and 
\begin{equation}\label{eq:ab}
    1<\beta \leq 2,\quad 0<\alpha\leq \frac{4(\beta-1)}{\beta}.
\end{equation}
These conditions together with the monotonicity of $\rho \mapsto f(t,x,\rho)$ and $\rho \mapsto g(x,\rho)$, and suitable technical assumptions yield the existence and uniqueness of weak solutions for~\eqref{eq:mfg} as analyzed in~\cite{achdou18}.

Following~\cite{achdou10,silva19}, we discretize $H$ in~\eqref{eq:H} as follows
\begin{equation}\label{eq:H_h}
    H_h(t,x,q_1,q_2,q_3,q_4,\eta)=\frac{((q_1^-)^2+(q_2^+)^2+(q_3^-)^2+(q_4^+)^2)^{\beta/2}}{\beta(\eta+\epsilon)^\alpha}
\end{equation}
and denote by $K=\mathbb{R}_+\times \mathbb{R}_- \times \mathbb{R}_+ \times \mathbb{R}_-$.

\begin{lemma}\label{lma:L_h}
    Assume that $H_h$ is given by~\eqref{eq:H_h}. Then for every $(t,x)\in \mathbb{R}\times \mathbb{T}^d$ and $\eta\geq 0$ we have that
    \begin{equation}\label{eq:L_h}
        L_h(t,x,v_1,v_2,v_3,v_4,\eta)=\begin{cases}
            (\eta+\epsilon)^{\alpha(\beta'-1)} \frac{(v_1^2+v_2^2+v_3^2+v_4^2)^{\beta'/2}}{\beta'},\quad (v_1,v_2,v_3,v_4) \in K,\\
            \infty,\quad \text{otherwise},
        \end{cases}
    \end{equation}
    where $\beta'=\frac{\beta}{\beta-1}$. Furthermore, we have that
    \begin{equation}\label{eq:E_h_congestion}
        E_h(t,x,\rho,w,\eta)=\begin{cases}
            (\eta+\epsilon)^{\alpha(\beta'-1)} \frac{|w|^{\beta'}}{\beta' \rho^{\beta'-1}},\quad \rho>0,~w\in K,\\
            0,\quad (\rho,w)=(0,0),\\
            \infty,\quad \text{otherwise},
        \end{cases}
    \end{equation}
    where $w=(w_1,w_2,w_3,w_4)$.
\end{lemma}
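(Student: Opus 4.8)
The plan is to compute $L_h$ directly from the definition \eqref{eq:H_h^*} of the Legendre dual and then to read off $E_h$ from the perspective-function formula \eqref{eq:E_h}.

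First I would fix $(t,x)$ and $\eta\ge 0$, set $c=(\eta+\epsilon)^\alpha$ (which is positive since $\epsilon>0$), and note from \eqref{eq:H_h} that $H_h(t,x,q,\eta)$ depends on $q=(q_1,q_2,q_3,q_4)$ only through $\zeta(q):=(q_1^-,q_2^+,q_3^-,q_4^+)\in\mathbb{R}_+^4$, namely $H_h=|\zeta(q)|^\beta/(\beta c)$ with $|\cdot|$ the Euclidean norm. The domain of $L_h$ then follows at once: if $v\notin K$, say $v_1<0$, then taking $q=(s,0,0,0)$ with $s\to+\infty$ keeps $\zeta(q)=0$ while $-q\cdot v=-sv_1\to+\infty$, so $L_h(t,x,v,\eta)=+\infty$; the cases $v_2>0$, $v_3<0$, $v_4>0$ are handled symmetrically.

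Next, for $v\in K$ I would reduce the supremum in \eqref{eq:H_h^*} to the orthant $\mathbb{R}_-\times\mathbb{R}_+\times\mathbb{R}_-\times\mathbb{R}_+$: with the other coordinates frozen, the objective is, as a function of $q_1\ge 0$, equal to $-q_1v_1$ plus a constant, hence nonincreasing because $v_1\ge 0$, so its supremum over $q_1$ is attained with $q_1\le 0$, and likewise in the remaining coordinates. On that orthant the substitution $u=(-q_1,q_2,-q_3,q_4)\in\mathbb{R}_+^4$, $\tilde v=(v_1,-v_2,v_3,-v_4)\in\mathbb{R}_+^4$ gives $\zeta(q)=u$, $-q\cdot v=u\cdot\tilde v$, $|\tilde v|=|v|$, so that $L_h=\sup_{u\in\mathbb{R}_+^4}\bigl(u\cdot\tilde v-|u|^\beta/(\beta c)\bigr)$. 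Because $\tilde v$ has nonnegative entries, the maximizer of $u\cdot\tilde v-|u|^\beta/(\beta c)$ over all of $\mathbb{R}^4$ is a nonnegative multiple of $\tilde v$ and hence lies in $\mathbb{R}_+^4$; thus the constraint is inactive and $L_h$ equals the Fenchel conjugate $\bigl(|\cdot|^\beta/(\beta c)\bigr)^*$ at $\tilde v$. A one-line homogeneity computation — write $u=t\tilde v/|\tilde v|$ and optimize in $t\ge 0$, using $\tfrac{1}{\beta-1}=\beta'-1$ and $1-\tfrac1\beta=\tfrac1{\beta'}$ — yields $L_h=c^{\beta'-1}|v|^{\beta'}/\beta'$, and inserting $c=(\eta+\epsilon)^\alpha$ gives \eqref{eq:L_h} (with value $0$ at $v=0$, as the formula also returns).

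Finally, \eqref{eq:E_h_congestion} is obtained by substituting \eqref{eq:L_h} into \eqref{eq:E_h}. For $\rho>0$ one has $w/\rho\in K\iff w\in K$ and $\rho L_h(t,x,w/\rho,\eta)=(\eta+\epsilon)^{\alpha(\beta'-1)}|w|^{\beta'}/(\beta'\rho^{\beta'-1})$, with $+\infty$ when $w\notin K$. For $\rho=0$ I would evaluate $\operatorname{rec}L_h(t,x,\cdot,\eta)$ with the admissible base point $v_0=0$ (legitimate since $L_h(t,x,0,\eta)=0$): for $w\in K$, $L_h(t,x,sw,\eta)/s=(\eta+\epsilon)^{\alpha(\beta'-1)}s^{\beta'-1}|w|^{\beta'}/\beta'$, which tends to $0$ if $w=0$ and to $+\infty$ if $w\ne 0$ since $\beta'-1=\tfrac1{\beta-1}>0$; for $w\notin K$ the quotient is identically $+\infty$. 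Hence $E_h(t,x,0,w,\eta)=0$ for $w=0$ and $+\infty$ otherwise, which together with the case $\rho>0$ and $E_h=+\infty$ for $\rho<0$ is exactly \eqref{eq:E_h_congestion}. The one step that needs care is the reduction to the orthant and the claim that the resulting constrained supremum is the genuine conjugate — i.e. that the one-sided dependence of $H_h$ on the $q_i$ creates no infinite directions other than those with $v\notin K$; the remaining computations (the radial conjugate and the recession function) are routine, and the inequality $\beta>1$ from \eqref{eq:ab}, equivalently $\beta'>1$, is precisely what makes the recession function collapse to the indicator of $\{0\}$.
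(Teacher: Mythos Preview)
Your proof is correct and follows essentially the same route as the paper: the case $v\notin K$ is handled identically, the recession-function computation for $E_h$ at $\rho=0$ matches the paper verbatim, and for $v\in K$ both arguments compute the same Legendre transform, with the only difference being that the paper writes out the first-order optimality conditions $v_i=-\partial_{q_i}H_h$ explicitly while you reduce via the substitution $u=(-q_1,q_2,-q_3,q_4)$ to the standard conjugate pair $|\cdot|^\beta/\beta \leftrightarrow |\cdot|^{\beta'}/\beta'$. This substitution is a slightly cleaner bookkeeping device but not a different strategy.
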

\begin{proof}
    We start by computing $L_h$. Suppose that $(v_1,v_2,v_3,v_4) \notin K$; for instance, let $v_1<0$. Then we have that
    \begin{equation*}
        \begin{split}
            L_h(t,x,v_1,v_2,v_3,v_4,\eta) \geq \sup_{q_1>0} \left(-q_1 v_1-H_h(t,x,q_1,0,0,0,\eta) \right)= \sup_{q_1>0} \left(-q_1 v_1\right)=\infty.
        \end{split}
    \end{equation*}
    Now assume that $(v_1,v_2,v_3,v_4) \in K$. The first order optimality conditions in the concave program~\eqref{eq:H_h^*} yield
    \begin{equation*}
        v_i=-\partial_{q_i} H_h(t,x,q_1,q_2,q_3,q_4,\eta),\quad 1\leq i \leq 4.
    \end{equation*}
    Furthermore, we have that
    \begin{equation*}
    \begin{split}
        \partial_{q_i} H_h(t,x,q_1,q_2,q_3,q_4,\eta)=&-\frac{((q_1^-)^2+(q_2^+)^2+(q_3^-)^2+(q_4^+)^2)^{\beta/2-1}}{(\eta+\epsilon)^\alpha}q_i^{-},\quad i=1,3,\\
        \partial_{q_i} H_h(t,x,q_1,q_2,q_3,q_4,\eta)=&\frac{((q_1^-)^2+(q_2^+)^2+(q_3^-)^2+(q_4^+)^2)^{\beta/2-1}}{(\eta+\epsilon)^\alpha}q_i^{+},\quad i=2,4.
    \end{split}
    \end{equation*}
    Hence, the first order optimality conditions yield
    \begin{equation*}
    \begin{split}
        v_i=&\frac{((q_1^-)^2+(q_2^+)^2+(q_3^-)^2+(q_4^+)^2)^{\beta/2-1}}{(\eta+\epsilon)^\alpha}q_i^{-}\geq 0,\quad i=1,3,\\
        v_i=&-\frac{((q_1^-)^2+(q_2^+)^2+(q_3^-)^2+(q_4^+)^2)^{\beta/2-1}}{(\eta+\epsilon)^\alpha}q_i^{+}\leq 0,\quad i=2,4,
    \end{split}
    \end{equation*}
    and
    \begin{equation*}
    \begin{split}
        v_1^2+v_2^2+v_3^2+v_4^2=&\frac{((q_1^-)^2+(q_2^+)^2+(q_3^-)^2+(q_4^+)^2)^{\beta-1}}{(\eta+\epsilon)^{2\alpha}},\\
        -q_1v_1-q_2v_2-q_3v_3-q_4v_4=&\frac{((q_1^-)^2+(q_2^+)^2+(q_3^-)^2+(q_4^+)^2)^{\beta/2}}{(\eta+\epsilon)^{\alpha}}.
    \end{split}
    \end{equation*}
    Therefore, we obtain
    \begin{equation*}
        \begin{split}
            L_h(t,x,v_1,v_2,v_3,v_4)=&\left(1-\frac{1}{\beta}\right) \frac{((q_1^-)^2+(q_2^+)^2+(q_3^-)^2+(q_4^+)^2)^{\beta/2}}{(\eta+\epsilon)^{\alpha}}\\
            =&\left(1-\frac{1}{\beta}\right)\frac{(\eta+\epsilon)^{\frac{\alpha \beta}{\beta-1}}(v_1^2+v_2^2+v_3^2+v_4^2)^{\frac{\beta}{2(\beta-1)}}}{(\eta+\epsilon)^{\alpha}}\\
            =&(\eta+\epsilon)^{\alpha(\beta'-1)} \frac{(v_1^2+v_2^2+v_3^2+v_4^2)^{\beta'/2}}{\beta'}.
        \end{split}
    \end{equation*}
    Next, we need to compute the recession function of $L_h(t,x,\cdot,\eta)$. Since
    \[
    L_h(t,x,0,0,0,0,\eta)=0,
    \]
    we have that $v_0=(0,0,0,0)\in \operatorname{dom}L_h(t,x,\cdot,\eta)$, and so we can compute the recession function via
    \begin{equation*}
        (\operatorname{rec} L_h(t,x,\cdot,\eta))(w)=\lim\limits_{s\to \infty} \frac{L_h(t,x,sw,\eta)}{s}.
    \end{equation*}
    Since $L_h(t,x,w,\eta)=\infty$ for $w\notin K$ we have that
    \begin{equation*}
        (\operatorname{rec} L_h(t,x,\cdot,\eta))(w)=\infty,\quad \forall w \notin K.
    \end{equation*}
    Furthermore, for $w\in K$ we have that
    \begin{equation*}
        \begin{split}
            (\operatorname{rec} L_h(t,x,\cdot,\eta))(w)=&\lim\limits_{s\to \infty} \frac{L_h(t,x,sw,\eta)}{s}=\lim\limits_{s\to \infty} (\eta+\epsilon)^{\alpha(\beta'-1)}\frac{s^{\beta'}|w|^{\beta'}}{s}\\
            =&\begin{cases}
                0,\quad w=0,\\
                \infty,\quad w\neq 0.
            \end{cases}
        \end{split}
    \end{equation*}
    Summarizing, we find that
    \begin{equation*}
        (\operatorname{rec} L_h(t,x,\cdot,\eta))(w)=\begin{cases}
                0,\quad w=0,\\
                \infty,\quad w\neq 0,
            \end{cases}
    \end{equation*}
    and~\eqref{eq:E_h_congestion} follows readily.
\end{proof}

In~\cite{achdou18}, the authors point out that conditions~\eqref{eq:ab} yield that $H$ satisfies the Lasry-Lions monotonicity condition~\eqref{eq:LL}. Here, we show that $H_h$ preserves that property.
\begin{lemma}\label{lma:LL_discrete}
    Let $H_h$ be as in~\eqref{eq:H_h}, and~\eqref{eq:ab} hold. Then $H_h$ satisfies the Lasry-Lions monotonicity condition~\eqref{eq:LL_discrete}.
\end{lemma}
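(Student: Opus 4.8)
The plan is to reduce the inequality~\eqref{eq:LL_discrete} for $H_h$ to the same inequality for the single model Hamiltonian $\mathrm{H}(q,\rho)=\frac{|q|^\beta}{\beta(\rho+\epsilon)^\alpha}$ on $\mathbb{R}^n$, $1\le n\le 4$, which is then verified by an elementary $2\times 2$ determinant computation. First I record the structure of~\eqref{eq:H_h}: the map $(q_1,q_2,q_3,q_4)\mapsto H_h$ is $C^1$ but fails to be $C^2$ precisely on the union of coordinate hyperplanes $\{q_1=0\}\cup\{q_2=0\}\cup\{q_3=0\}\cup\{q_4=0\}$, a Lebesgue-null set, so it suffices to verify~\eqref{eq:LL_discrete} on the complement (on the exceptional set one may work with one-sided second derivatives, which are limits of interior Hessians and hence inherit any matrix bound). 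On each connected component of that complement the active index set $I=\{\,i:\ q_i^-\neq 0\ (i\in\{1,3\})\ \text{or}\ q_i^+\neq 0\ (i\in\{2,4\})\,\}$ is constant and $H_h(t,x,q,\rho)=\mathrm{H}(q_I,\rho)$ with $q_I=(q_i)_{i\in I}$, $|q_I|>0$; in the degenerate case $I=\emptyset$ one has $H_h\equiv 0$ locally and the matrix in~\eqref{eq:LL_discrete} vanishes. Consequently $\partial_\rho H_h$, $\nabla_q H_h$, $\nabla^2_q H_h$ agree with the corresponding quantities for $\mathrm{H}$ in the $I$-entries and vanish elsewhere, so, after a coordinate permutation, the matrix in~\eqref{eq:LL_discrete} is the $(1+|I|)\times(1+|I|)$ Lasry--Lions matrix of $\mathrm{H}$ bordered by zero rows and columns, hence positive semidefinite if and only if the latter is.

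For the model, put $c=(\rho+\epsilon)^{-\alpha}>0$, $s=|q|>0$, $\hat q=q/s$. Direct differentiation gives $\nabla_q\mathrm{H}=c\,s^{\beta-2}q$, $\nabla^2_q\mathrm{H}=c\,s^{\beta-2}\bigl(I+(\beta-2)\hat q\hat q^{\top}\bigr)$, $\partial_\rho\mathrm{H}=-\tfrac{\alpha}{\rho+\epsilon}\mathrm{H}$, and—since the $q$- and $\rho$-dependencies factor—$\nabla_q\partial_\rho\mathrm{H}=-\tfrac{\alpha}{\rho+\epsilon}\nabla_q\mathrm{H}$. Testing the Lasry--Lions matrix of $\mathrm{H}$ (with off-diagonal block $\tfrac{\rho}{2}\nabla_q\partial_\rho\mathrm{H}$) against $(a,b)\in\mathbb{R}\times\mathbb{R}^n$ and decomposing $b=t\hat q+b_\perp$, $t=\langle b,\hat q\rangle$, the $b_\perp$-contribution is $\rho c\,s^{\beta-2}|b_\perp|^2\ge 0$, while the rest equals $c\,s^{\beta-2}$ times the quadratic form in $(a,t)$ with matrix
\[
\begin{pmatrix}
\dfrac{\alpha s^2}{\beta(\rho+\epsilon)} & -\dfrac{\alpha\rho s}{2(\rho+\epsilon)}\\[8pt]
-\dfrac{\alpha\rho s}{2(\rho+\epsilon)} & (\beta-1)\rho
\end{pmatrix}.
\]
Its diagonal entries are $\ge 0$ because $\alpha>0$, $\beta>1$, $\rho\ge 0$, and its determinant equals $\dfrac{\alpha\rho s^2}{\rho+\epsilon}\Bigl(\dfrac{\beta-1}{\beta}-\dfrac{\alpha\rho}{4(\rho+\epsilon)}\Bigr)\ge 0$, since $\tfrac{\rho}{\rho+\epsilon}<1$ (here $\epsilon>0$ matters) and $\alpha\le\tfrac{4(\beta-1)}{\beta}$ by~\eqref{eq:ab}. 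Thus the Lasry--Lions matrix of $\mathrm{H}$ is positive semidefinite, and the reduction above finishes the proof.

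The one genuinely delicate point is the lack of $C^2$-regularity of $H_h$: it forces one to interpret~\eqref{eq:LL_discrete} in the almost-everywhere (or one-sided) sense and to justify the zero-border reduction with care; the matrix estimate itself is routine. It is worth pointing out where the hypotheses enter: $\beta>1$ is responsible for the nonnegativity of the diagonal entry $(\beta-1)\rho$ (equivalently, for $\nabla^2_q\mathrm{H}$ being positive semidefinite), the sharp bound $\alpha\le\tfrac{4(\beta-1)}{\beta}$ is exactly what makes the determinant nonnegative, and the strict positivity $\epsilon>0$ from~\eqref{eq:H} provides the slack $\tfrac{\rho}{\rho+\epsilon}<1$ that makes the estimate hold uniformly in $\rho\ge 0$.
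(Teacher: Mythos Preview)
Your proof is correct and takes a somewhat different, more streamlined route than the paper's. The paper works globally with the truncated function $\psi(q)=\beta^{-1}\bigl((q_1^-)^2+(q_2^+)^2+(q_3^-)^2+(q_4^+)^2\bigr)^{\beta/2}$, carrying Heaviside factors through explicit formulas for $\nabla\psi$ and $\nabla^2\psi$; it then exploits the $\beta$-homogeneity of $\psi$ to scale out $\rho$, performs a Schur-complement reduction in the scalar test variable, and finishes with Cauchy--Schwarz applied to $|\tilde q|^2 M(q)-\tilde q\otimes\tilde q\ge 0$. You instead localize to an open orthant where $H_h$ coincides with the smooth isotropic model $\mathrm{H}(q_I,\rho)=|q_I|^\beta/\bigl(\beta(\rho+\epsilon)^\alpha\bigr)$, observe that the Lasry--Lions matrix block-decomposes (zero rows and columns for the inactive indices), and verify positive semidefiniteness for $\mathrm{H}$ directly via the parallel/perpendicular decomposition of the test vector and a $2\times 2$ determinant. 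Your orthant reduction makes transparent \emph{why} the continuous condition~\eqref{eq:LL} for $H$ passes to~\eqref{eq:LL_discrete} for $H_h$---the upwind discretization merely selects a coordinate subspace---and avoids the Heaviside bookkeeping; the paper's global computation, conversely, handles the coordinate hyperplanes in one stroke without the separate $C^2$-regularity discussion you need. One small remark: your claim that ``$\epsilon>0$ matters'' for the determinant bound is slightly off---the inequality $\tfrac{\alpha\rho}{4(\rho+\epsilon)}\le\tfrac{\beta-1}{\beta}$ already holds with $\epsilon=0$ under~\eqref{eq:ab}; the role of $\epsilon>0$ is rather to keep $H_h$ and its derivatives finite at $\rho=0$.
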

\begin{proof}
    Denoting by
    \begin{equation}\label{eq:psi}
        \psi(q_1,q_2,q_3,q_4)=\frac{((q_1^-)^2+(q_2^+)^2+(q_3^-)^2+(q_4^+)^2)^{\beta/2}}{\beta},
    \end{equation}
    we have that
    \begin{equation*}
        H_h(t,x,q,\rho)=(\rho+\epsilon)^{-\alpha}\psi(q),
    \end{equation*}
    where we denote by $q=(q_1,q_2,q_3,q_4)$. Then we have that
    \begin{equation*}
        \begin{split}
            \partial_\rho H_h= -\alpha (\rho+\epsilon)^{-\alpha-1} \psi(q),\quad \nabla_q H_h=(\rho+\epsilon)^{-\alpha} \nabla \psi(q),\\
            \rho \nabla_q \partial_\rho H_h=-\alpha \rho (\rho+\epsilon)^{-\alpha-1} \nabla \psi(q),\quad \rho \nabla^2_q H_h=\rho(\rho+\epsilon)^{\alpha} \nabla^2 \psi(q).
        \end{split}
    \end{equation*}
    Hence, we have to prove that
    \begin{equation*}
        \begin{pmatrix}
            \alpha (\rho+\epsilon)^{-\alpha-1} \psi(q) & -\frac{1}{2}\alpha \rho (\rho+\epsilon)^{-\alpha-1} \nabla^\top \psi(q)\\
            -\frac{1}{2}\alpha \rho (\rho+\epsilon)^{-\alpha-1} \nabla \psi(q) & \rho(\rho+\epsilon)^{\alpha} \nabla^2 \psi(q)
        \end{pmatrix} \geq 0
    \end{equation*}
    for all $\rho \geq 0$ and $q\in \mathbb{R}^4$. Note that the inequality is trivial when $\rho=0$ because $\psi(q)\geq 0$ for all $q\in \mathbb{R}^4$. Hence, we can assume that $\rho>0$. Factoring out a positive number $\alpha \rho (\rho+\epsilon)^{-\alpha-1}$ we arrive at an equivalent inequality
    \begin{equation*}
        \begin{pmatrix}
            \frac{\psi(q)}{\rho} & -\frac{\nabla^\top \psi(q)}{2}\\
            -\frac{\nabla \psi(q)}{2} & \frac{\rho+\epsilon}{\alpha} \nabla^2 \psi(q)
        \end{pmatrix} \geq 0.
    \end{equation*}
    Since $\psi$ is convex, we have that $\nabla^2 \psi(q) \geq 0$. Additionally, from~\eqref{eq:ab} we have that
    \begin{equation*}
        \frac{\rho+\epsilon}{\alpha}\geq \frac{\beta \rho}{4(\beta-1)},
    \end{equation*}
    and so it is sufficient to prove that
    \begin{equation*}
        \begin{pmatrix}
            \frac{\psi(q)}{\rho} & -\frac{\nabla^\top \psi(q)}{2}\\
            -\frac{\nabla \psi(q)}{2} & \frac{\beta \rho}{4(\beta-1)} \nabla^2 \psi(q)
        \end{pmatrix} \geq 0.
    \end{equation*}
    Next, since the previous inequality must hold for all $q\in \mathbb{R}^4$, we obtain an equivalent statement if we replace $q$ by $\rho q$; that is,
    \begin{equation*}
        \begin{pmatrix}
            \frac{\psi(\rho q)}{\rho} & -\frac{\nabla^\top \psi(\rho q)}{2}\\
            -\frac{\nabla \psi(\rho q)}{2} & \frac{\beta \rho}{4(\beta-1)} \nabla^2 \psi(\rho q)
        \end{pmatrix} \geq 0,\quad \forall \rho>0,~q\in \mathbb{R}^4.
    \end{equation*}
    Denoting by $\psi_\rho(q)=\frac{\psi(\rho q)}{\rho}$, we find that the previous expression is nothing but
    \begin{equation*}
        \begin{pmatrix}
            \psi_\rho(q)& -\frac{\nabla^\top \psi_\rho(q)}{2}\\
            -\frac{\nabla \psi_\rho(q))}{2} & \frac{\beta }{4(\beta-1)} \nabla^2 \psi_\rho(q)
        \end{pmatrix} \geq 0,\quad \forall \rho>0,~q\in \mathbb{R}^4.
    \end{equation*}
    Furthermore, due to the homogeneity of $\psi$, we have that $\psi_\rho(q)=\rho^{\beta-1} \psi(q)$, and so
    \begin{equation*}
         \begin{pmatrix}
            \psi_\rho(q)& -\frac{\nabla^\top \psi_\rho(q)}{2}\\
            -\frac{\nabla \psi_\rho(q)}{2} & \frac{\beta }{4(\beta-1)} \nabla^2 \psi_\rho(q)
        \end{pmatrix}=\rho^{\beta-1} \begin{pmatrix}
            \psi(q)& -\frac{\nabla^\top \psi(q)}{2}\\
            -\frac{\nabla \psi( q)}{2} & \frac{\beta }{4(\beta-1)} \nabla^2 \psi(q)
        \end{pmatrix}.
    \end{equation*}
    The latter means that we need to prove that
    \begin{equation*}
        \begin{pmatrix}
            \psi(q)& -\frac{\nabla^\top \psi(q)}{2}\\
            -\frac{\nabla \psi( q)}{2} & \frac{\beta }{4(\beta-1)} \nabla^2 \psi(q)
        \end{pmatrix} \geq 0,\quad \forall q\in \mathbb{R}^4.
    \end{equation*}
    For every $(\xi,\chi) \in \mathbb{R}\times \mathbb{R}^4$ we have to prove that
    \begin{equation*}
    \begin{split}
        &(\xi,\chi)^\top \begin{pmatrix}
            \psi(q)& -\frac{\nabla^\top \psi(q)}{2}\\
            -\frac{\nabla \psi( q)}{2} & \frac{\beta }{4(\beta-1)} \nabla^2 \psi(q)
        \end{pmatrix} (\xi,\chi)\\
        =&\xi^2 \psi(q)-\xi \nabla \psi(q) \cdot \chi+\frac{\beta}{4(\beta-1)}\chi^\top \nabla^2 \psi(q) \chi \geq 0
    \end{split}
    \end{equation*}
    When $\psi(q)=0$ we have that $\nabla \psi(q)=0$, and the inequality follows from $\nabla^2 \psi(q)\geq 0$. Hence, assuming that $\psi(q)>0$ and optimizing with respect to $\xi$, we obtain an equivalent inequality
    \begin{equation*}
        -\frac{|\nabla\psi(q)\cdot \chi|^2}{4\psi(q)}+\frac{\beta}{4(\beta-1)} \chi^\top \nabla^2 \psi(q) \chi \geq 0,
    \end{equation*}
    or
    \begin{equation*}
        \chi^\top \left( \frac{\beta}{\beta-1} \psi(q) \nabla^2 \psi(q)-\nabla \psi(q) \otimes \nabla \psi(q) \right) \chi \geq 0.
    \end{equation*}
    Denoting by
    \begin{equation*}
        \tilde{q}=(-q_1^{-},q_2^+,-q_3^{-},q_4^{+}),\quad \text{for} \quad q=(q_1,q_2,q_3,q_4),
    \end{equation*}
    we have that
    \begin{equation*}
    \begin{split}
        \nabla \psi(q)=|\tilde{q}|^{\beta-2} \tilde{q},\quad \nabla^2 \psi(q)=(\beta-2)|\tilde{q}|^{\beta-4} \Tilde{q}\otimes \Tilde{q}+ |\Tilde{q}|^{\beta-2} M(q),
    \end{split}
    \end{equation*}
    where
    \begin{equation*}
        M(q)=\operatorname{diag}(\operatorname{Hvs}(-q_1),\operatorname{Hvs}(q_2),\operatorname{Hvs}(-q_3),\operatorname{Hvs}(q_4)),
    \end{equation*}
    and $\operatorname{Hvs}$ is the Heaviside step function. Hence we obtain that
    \begin{equation*}
        \begin{split}
        &\frac{\beta}{\beta-1} \psi(q) \nabla^2 \psi(q)-\nabla \psi(q) \otimes \nabla \psi(q)\\
        =&\frac{\beta-2}{\beta-1}|\tilde{q}|^{2\beta-4} \Tilde{q}\otimes \Tilde{q} + \frac{1}{\beta-1} |\Tilde{q}|^{2\beta-2} M(q)-|\tilde{q}|^{2\beta-4} \Tilde{q}\otimes \Tilde{q}\\
        =&\frac{|\Tilde{q}|^{2\beta-4}}{\beta-1} (|\Tilde{q}|^2 M(q)-\Tilde{q}\otimes \Tilde{q}),
        \end{split}
    \end{equation*}
    and therefore we need to prove that 
    \begin{equation*}
    \begin{split}
        &((q_1^-)^2+(q_2^+)^2+(q_3^-)^2+(q_4^+)^2)\\
        \times &(\chi_1^2 \operatorname{Hvs}(-q_1)+\chi_2^2 \operatorname{Hvs}(q_2)+\chi_3^2 \operatorname{Hvs}(-q_3)+\chi_4^2 \operatorname{Hvs}(q_4))\\
        \geq &  (-q_1^- \chi_1+q_2^+ \chi_2-q_3^- \chi_3+q_4^+ \chi_4)^2.
    \end{split}
    \end{equation*}
    The latter simply follows from the Cauchy-Schwarz inequality
    \begin{equation*}
        \sum_{i=1}^4 a_i^2 \cdot \sum_{i=1}^4 b_i^2 \geq \left(\sum_{i=1}^4 a_i b_i\right)^2 
    \end{equation*}
    applied to
    \begin{equation*}
    \begin{split}
        a_1=&-q_1^-,~a_2=q_2^+,~a_3=-q_3^-,~a_4=q_4^+,\\
        b_1=&\chi_1 \operatorname{Hvs}(-q_1),~b_2=\chi_2 \operatorname{Hvs}(q_2),~b_3=\chi_3 \operatorname{Hvs}(-q_3),~b_4=\chi_4 \operatorname{Hvs}(q_4),
    \end{split}
    \end{equation*}
    and taking into account that
    \begin{equation*}
       q^- \operatorname{Hvs}(-q)=q^-,\quad q^+ \operatorname{Hvs}(q)=q^+,\quad \operatorname{Hvs}(q)^2=\operatorname{Hvs}(q),\quad \forall q\in \mathbb{R}.
    \end{equation*}
    This completes the proof.
\end{proof}

\subsection{Properties of the discrete energy}

Here, we discuss key properties of the function $J$ defined in~\eqref{eq:J}.

\begin{lemma}\label{lma:J_min_eta}
    Assume that $\eta \geq 0$, and $F(t,x,\cdot)$, $G(x,\cdot)$ are convex and continuous for $\rho \geq 0$. Furthermore, for $(\lambda, y,z)\in \{0,1\}\times \mathcal{M} \times \mathcal{W}$ denote by
    \begin{equation}\label{eq:J_lyz}
        J_{\lambda,y,z}(\rho,w,\eta)=\lambda \left(\frac{|\rho|^2}{2}-\rho \cdot y  + \frac{|w|^2}{2}-w \cdot z \right) + J(\rho,w,\eta).
    \end{equation}
    Then the following statements are true.
    \begin{enumerate}
        \item The convex program
    \begin{equation}\label{eq:J_min_eta}
        \inf_{(1-\lambda)(A\rho+Bw)=0} J_{\lambda,y,z}(\rho,w,\eta)
    \end{equation}
    admits a minimizer for all $(\lambda, y,z)\in \{0,1\} \times \mathcal{M} \times \mathcal{W}$.
        \item When $\nu>0$ and $\lambda=0$ all minimizers of~\eqref{eq:J_min_eta} satisfy
        \begin{equation}\label{eq:rho_positive}
    \rho^{k+1}_{ij}>0,\quad \forall 0\leq i,j \leq N_h-1,~0 \leq k \leq N_T-1.
\end{equation}
    \end{enumerate}
\end{lemma}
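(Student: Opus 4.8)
The plan is to prove (1) by the direct method in the calculus of variations and (2) by a propagation/connectedness argument on the discrete continuity equation; both rely on the explicit formula for $E_h$ from Lemma~\ref{lma:L_h}. I would first record the facts I will use repeatedly. By~\eqref{eq:E_h_congestion}, $E_h(t,x,\rho,w,\eta)$ is finite only when $\rho\ge 0$ and $w\in K$, it is then nonnegative, and $E_h(t,x,0,w,\eta)<\infty$ forces $w=0$; moreover $(\eta+\epsilon)^{\alpha(\beta'-1)}\ge\epsilon^{\alpha(\beta'-1)}>0$ since $\alpha,\beta'-1>0$ and $\eta\ge 0$. Hence every $(\rho,w)$ with $J_{\lambda,y,z}(\rho,w,\eta)<\infty$ satisfies $\rho\ge 0$ and $w\in K$ entrywise, $\rho^0=\rho_0$, and $w^k_{ij}=0$ wherever $\rho^{k+1}_{ij}=0$. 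Summing the constraint $A\rho+Bw=0$ over the spatial grid, together with $\sum_{ij}(\Delta_h f)_{ij}=0$ and $\sum_{ij}(Bw)^k_{ij}=0$ (periodicity and telescoping), gives discrete mass conservation $\sum_{ij}\rho^{k+1}_{ij}=\sum_{ij}(\rho_0)_{ij}=h^{-2}\int_{\mathbb{T}^2}\rho_0=:C>0$; in particular, on the feasible set with $\lambda=0$ the nonnegative array $\rho$ has all entries bounded by $C$. Finally, by Remark~\ref{rmk:perspective_function}, $J_{\lambda,y,z}(\cdot,\cdot,\eta)$ is convex and lower semicontinuous, and the constraint set is a closed affine subspace.

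For (1) I would split on $\lambda$. When $\lambda=1$ the constraint is vacuous, and $J_{1,y,z}$ is proper (use $\rho^k\equiv\rho_0$, $w\equiv 0$, which makes every $E_h$ term vanish) and coercive, because $E_h\ge 0$, each convex continuous $F(t_k,x_{ij},\cdot)$ and $G(x_{ij},\cdot)$ is minorized by an affine function on $[0,\infty)$, and the quadratic term $\frac{1}{2}|\rho|^2-\rho\cdot y+\frac{1}{2}|w|^2-w\cdot z$ dominates; the direct method then gives a minimizer. When $\lambda=0$, a feasible point with finite value is $w\equiv 0$ together with the array defined by $\rho^0=\rho_0$ and the backward-Euler recursion $\rho^{k+1}=(I-\nu\Delta t\,\Delta_h)^{-1}\rho^k$; since $I-\nu\Delta t\,\Delta_h$ is a nonsingular $M$-matrix (the identity when $\nu=0$), this $\rho$ is nonnegative and $J(\rho,0,\eta)<\infty$. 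On the feasible set $\rho$ is bounded by $C$, so each term satisfies $E_h(t_k,x_{ij},\rho^{k+1}_{ij},w^k_{ij},\eta^{k+1}_{ij})\ge c\,|w^k_{ij}|^{\beta'}$ for a fixed $c>0$; hence every sublevel set of $J$ intersected with the feasible set is bounded in both $\rho$ and $w$, closed, and nonempty for a large threshold, so the lower semicontinuous $J$ attains its minimum there.

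For (2) I would argue by contradiction: suppose a minimizer $(\bar\rho,\bar w)$ has $\bar\rho^{\bar k+1}_{\bar i\bar j}=0$, and set $Z=\{(i,j):\bar\rho^{\bar k+1}_{ij}=0\}\ne\emptyset$. The crux is to show that $(i,j)\in Z$ forces all four grid-neighbors of $(i,j)$ into $Z$. Indeed, $(i,j)\in Z$ gives $\bar w^{\bar k}_{ij}=0$; since $\bar w\in K$ entrywise, inspecting the definition of $B$ shows that the on-site terms $\bar w^{\bar k}_{ij}$ cancel and the remaining neighbor contributions have a definite sign, so $(B\bar w)^{\bar k}_{ij}\le 0$. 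The continuity equation $A\bar\rho+B\bar w=0$ at $(i,j,\bar k)$, using $\bar\rho^{\bar k+1}_{ij}=0$, then reads
\[
\frac{\bar\rho^{\bar k}_{ij}}{\Delta t}+\nu\,\frac{\bar\rho^{\bar k+1}_{(i-1)j}+\bar\rho^{\bar k+1}_{(i+1)j}+\bar\rho^{\bar k+1}_{i(j-1)}+\bar\rho^{\bar k+1}_{i(j+1)}}{4h^2}=(B\bar w)^{\bar k}_{ij}\le 0,
\]
and the left-hand side is a sum of nonnegative terms ($\bar\rho\ge 0$), so each vanishes; as $\nu>0$, the four neighboring values of $\bar\rho^{\bar k+1}$ are zero. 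Connectedness of the periodic grid then forces $\bar\rho^{\bar k+1}\equiv 0$, contradicting $\sum_{ij}\bar\rho^{\bar k+1}_{ij}=C>0$. I expect the main obstacle to be verifying $(B\bar w)^{\bar k}_{ij}\le 0$: it rests on the one-sided (upwind) sign pattern $\bar w\in K$ combined with the exact cancellation of the on-site terms occurring precisely when $\bar\rho^{\bar k+1}_{ij}=0$, and this is the only point where $\nu>0$ and the upwind structure of the scheme enter.
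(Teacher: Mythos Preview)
Your argument is correct and, for part (1), follows the same direct-method route as the paper: a feasible point to show finiteness, mass conservation to bound $\rho$ when $\lambda=0$, the quadratic term to get coercivity when $\lambda=1$, and lower semicontinuity of the perspective function to pass to the limit. Your feasible point for $\lambda=0$ (take $w\equiv 0$ and solve the pure backward-Euler diffusion) is slightly different from the paper's, which cites \cite[Lemma~3.1]{silva19} to produce a pair $(\tilde\rho,\tilde w)$ with $\tilde\rho>0$ and $\tilde w\in\operatorname{int}(K)$, but either choice suffices here.

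For part (2) the paper gives no argument at all and simply defers to \cite[Lemma~3.2]{silva19}; what you wrote is essentially that argument, namely a discrete strong-maximum-principle propagation. Your key inequality $(B\bar w)^{\bar k}_{ij}\le 0$ at a zero of $\bar\rho^{\bar k+1}$ is correct: with $\bar w^{\bar k}_{ij}=0$ the four on-site contributions drop out of $(Bw)^k_{ij}=h^{-1}\bigl(w^{k,1}_{ij}-w^{k,1}_{(i-1)j}+w^{k,2}_{(i+1)j}-w^{k,2}_{ij}+w^{k,3}_{ij}-w^{k,3}_{i(j-1)}+w^{k,4}_{i(j+1)}-w^{k,4}_{ij}\bigr)$, and the remaining neighbor terms have the sign you claim because $w\in K=\mathbb{R}_+\times\mathbb{R}_-\times\mathbb{R}_+\times\mathbb{R}_-$. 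The displayed identity then forces the four neighboring values of $\bar\rho^{\bar k+1}$ to vanish, and connectedness plus mass conservation finish the contradiction.
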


\begin{remark}
The convex program~\eqref{eq:J_min_eta} is analogous to the problem $(P_{h,\Delta t})$ in~\cite{silva19} with the difference of having extra parameters $\lambda,y,z,\eta$. Hence, the proof techniques used in~\cite[Theorem 3.1]{silva19} and auxiliary lemmas extend to setting of~\cref{lma:J_min_eta}.    
\end{remark}

\begin{remark}
    The convex program~\eqref{eq:J_min_eta} is a shorthand for considering two programs
    \begin{equation*}
        \inf_{A\rho+Bw=0} J(\rho,w,\eta)
    \end{equation*}
    and
    \begin{eqnarray*}
        \inf_{\rho,w} \left\{\frac{|\rho|^2}{2}-\rho \cdot y  + \frac{|w|^2}{2}-w \cdot z+J(\rho,w,\eta) \right\}
    \end{eqnarray*}
    simultaneously. Both are important for our further analysis.
\end{remark}

\begin{proof}[Proof of~\cref{lma:J_min_eta}]

\begin{enumerate}
        \item We argue by the direct method of calculus of variations. Fix a triple $(\lambda,y,z) \in \{0,1\}\times \mathcal{M} \times \mathcal{W}$. According to~\cite[Lemma 3.1]{silva19} there exists $(\tilde{\rho},\tilde{w}) \in \mathcal{M}\times \mathcal{W}$ such that
        \begin{equation}\label{eq:tilde_rho>0}
            \tilde{\rho}^{k+1}_{ij}>0,\quad \Tilde{w}^k_{ij} \in \operatorname{int}(K),\quad \forall 0\leq i,j \leq N_h-1,~0\leq k \leq N_T-1,
        \end{equation}
        and
        \begin{equation}\label{eq:tilde_rho_sol}
            A\tilde{\rho}+B \tilde{w}=0,\quad \tilde{\rho}^0_{ij}=(\rho_0)_{ij},\quad \forall 1\leq i,j \leq N_h-1.
        \end{equation}
        Hence, $J_{\lambda,y,z}(\tilde{\rho},\tilde{w},\eta)<\infty$, and so
        \begin{equation*}
            \inf_{(1-\lambda)(A\rho+Bw)=0} J_{\lambda,y,z}(\rho,w,\eta)\leq J_{\lambda,y,z}(\tilde{\rho},\tilde{w},\eta)<\infty.
        \end{equation*}
        \begin{enumerate}
            \item When $\lambda=0$, for every $(\rho,w)$ such that $\rho\geq0$, $\rho^0=\rho_0$, and $A\rho+B w=0$ one has that~\cite[Section 3]{silva19}
        \begin{equation*}
            \sum_{0\leq i,j \leq N_h-1} \rho^k_{ij}=\frac{1}{h^2},\quad \forall 0\leq k \leq N_T,
        \end{equation*}
        and so
        \begin{equation}\label{eq:rho_unibounds}
            0\leq \rho^k_{ij}\leq \frac{1}{h^2},\quad \forall 0\leq i,j\leq N_h-1,~0\leq k \leq N_T.
        \end{equation}
        Thus, taking into account the continuity of $F,G$ and the fact that $E_h \geq 0$, we find that
        \begin{equation*}
            \inf_{(1-\lambda)(A\rho+Bw)=0} J_{\lambda,y,z}(\rho,w,\eta)=\inf_{A\rho+Bw=0} J(\rho,w,\eta)>-\infty.
        \end{equation*}
            \item When $\lambda=1$, the convexity of $F(t,x,\cdot)$ and $G(t,x,\cdot)$ yields
            \begin{equation}\label{eq:F_G_lower_bound}
            \begin{split}
                F(t_k,x_{ij},\rho^{k+1}_{ij}) \geq & F(t_k,x_{ij},1)+(\rho^{k+1}_{ij}-1) d^{k+1}_{ij}\\
                G(x_{ij},\rho^{N_T}_{ij}) \geq & G(x_{ij},1)+(\rho^{N_T}_{ij}-1) r_{ij}
            \end{split}
            \end{equation}
            for some $d^{k+1}_{ij} \in \partial_\rho F(t_k,x_{ij},1)$ and $r_{ij} \in \partial_\rho G(x_{ij},1)$. Thus, applying $E_h \geq 0$ in combination with the Cauchy-Schwarz inequality, we obtain
            \begin{equation*}
                J_{1,y,z}(\rho,w,\eta)\geq \frac{|\rho|^2}{2}-\rho \cdot \tilde{y}+\frac{|w|^2}{2}-w \cdot z -c \geq -\frac{|\tilde{y}|^2}{2}-\frac{|z|^2}{2}-c,
            \end{equation*}
            where
            \begin{equation}\label{eq:J_lower_bound_terms}
            \begin{split}
                \tilde{y}^{k+1}_{ij}=&y^{k+1}_{ij}-d^{k+1}_{ij},\quad 0\leq k \leq N_T-2\\
                \tilde{y}^{N_T}_{ij}=&y^{N_T}_{ij}-d^{N_T}_{ij}-r_{ij},\\
                c=&\sum\limits_{\substack{0\leq i,j \leq N_h-1\\ 0\leq k\leq N_T-1}} (d^{k+1}_{ij}-F(t_k,x_{ij},1))\\
                &+\frac{1}{\Delta t}\sum\limits_{0\leq i,j \leq N_h-1} (r^{N_T}_{ij}-G(x_{ij},1)).
            \end{split}
            \end{equation}
            Hence
            \begin{equation*}
                \inf_{(1-\lambda)(A\rho+Bw)=0} J_{\lambda,y,z}(\rho,w,\eta)=\inf_{\rho,w} J_1(\rho,w,\eta)>-\infty.
            \end{equation*}
        \end{enumerate}

        \vskip 0.5cm
        
        From (a), (b) above, we find that the infimum in~\eqref{eq:J_min_eta} is always finite. Let $(\rho_n,w_n)$ be a minimizing sequence for~\eqref{eq:J_min_eta}. Then we have that
        \begin{equation*}
            J_{\lambda,y,z}(\rho_n,w_n,\eta) \leq C,\quad \forall n \geq 1,
        \end{equation*}
        for some $C>0$. Our goal is to show that the sequence $\left\{(\rho_n,w_n) \right\}$ is bounded and extract a convergent subsequence. Again, let us discuss two cases.
        \begin{enumerate}
            \item When $\lambda=0$, taking into account~\eqref{eq:rho_unibounds} for $\rho_n$ and the continuity of $F,G$, we have that
        \begin{equation}\label{eq:E_h_unibound}
        \begin{split}
          E_h(t_k,x_{ij},(\rho_n)^{k+1}_{ij},(w_n)^k_{ij},\eta^{k+1}_{ij}) \leq \Tilde{C},
        \end{split}
        \end{equation}
        for all $n\geq 1$, $0\leq i,j \leq N_h-1$, $0\leq k \leq N_T-1$, and some $\Tilde{C}>0$. Hence if $(\rho_n)^{k+1}_{ij}>0$ then
        \begin{equation}\label{eq:w_unibound}
            |(w_n)^k_{ij}|\leq \left(\frac{\beta'}{\epsilon^{\alpha(\beta'-1)}}\left(\frac{1}{h^2}\right)^{\beta'-1}\Tilde{C}\right)^{\frac{1}{\beta}}.
        \end{equation}
        If $(\rho_n)^{k+1}_{ij}=0$ then we necessarily have that $(w_n)^k_{ij}=0$. Thus, in any case~\eqref{eq:w_unibound} holds for all $i,j,k$.
        \item Let $\lambda=1$, and $\tilde{y},c,d,r$ be as in~\eqref{eq:F_G_lower_bound} and~\eqref{eq:J_lower_bound_terms}. Then we have that
        \begin{equation*}
            \begin{split}
                C \geq & J_{1,y,z}(\rho_n,w_n,\eta) \geq  \frac{|\rho_n|^2}{2}-\rho_n \cdot \Tilde{y}+\frac{|w_n|^2}{2}-w_n\cdot z -c \\
                \geq& \frac{|\rho_n|^2}{4}+\frac{|w_n|^2}{4}-|\tilde{y}|^2-|z|^2-c, 
            \end{split}
        \end{equation*}
        and so
        \begin{equation*}
            \frac{|\rho_n|^2}{4}+\frac{|w_n|^2}{4} \leq |\tilde{y}|^2+|z|^2+c+C,\quad \forall n \geq 1.
        \end{equation*}
        \end{enumerate}
        
        \vskip 0.5cm
        
        Thus, for both cases $\lambda=0$ and $\lambda=1$ the minimizing sequence $\left\{(\rho^n,w^n)\right\}$ is precompact, and there exists a (subsequential) limit $(\rho^*,w^*) \in \mathcal{M}\times \mathcal{W}$. Since $\rho_n\geq 0$, $(\rho_n)^0=\rho_0$ and $(1-\lambda)(A\rho_n+Bw_n)=0$, we have that 
        \begin{equation*}
            \rho^*\geq 0,\quad (\rho^*)^0=\rho_0,\quad (1-\lambda)(A\rho^*+B w^*)=0.
        \end{equation*}
        Furthermore, taking into account the lower semicontinuity of $E_h(t,x,\cdot,\cdot,\eta)$ (see~\cref{rmk:perspective_function}) and the continuity of $F,G$ we obtain that
        \begin{equation*}
            J_{\lambda,y,z}(\rho^*,w^*,\eta) \leq \liminf_{n\to \infty} J_{\lambda,y,z}(\rho_n,w_n,\eta),
        \end{equation*}
        and so $(\rho^*,w^*)$ is a minimizer for~\eqref{eq:J_min_eta}.
        \item See the proof of the analogous statement in~\cite[Lemma 3.2]{silva19}.
    \end{enumerate}
\end{proof}

Denote by
\begin{equation}\label{eq:P}
    \mathcal{P}_0=\left\{\eta \in \mathcal{M}~:~\eta \geq 0,~\eta^0=\rho_0,~\sum_{0\leq i,j \leq N_h-1} \eta^k_{ij}=\frac{1}{h^2},~\forall 1\leq k \leq N_T\right\},
\end{equation}
and
\begin{equation}\label{eq:P1}
    \mathcal{P}_1=\left\{\eta \in \mathcal{M}~:~\eta \geq 0,~\eta^0=\rho_0,~|\eta|^2\leq c_1\right\},
\end{equation}
where
\begin{equation*}
\begin{split}
    c_1=&4\Bigg(|\tilde{y}|^2+|z|^2+c+\frac{|\bar{\rho}|^2}{2}-\bar{\rho}\cdot y+\sum\limits_{\substack{0\leq i,j \leq N_h-1\\ 0\leq k\leq N_T-1}} F(t_k,x_{ij},1)\\
    &+\frac{1}{\Delta t}\sum\limits_{0\leq i,j \leq N_h-1} G(x_{ij}, 1)\Bigg),\\
    \bar{\rho}^0=&\rho_0,\quad \bar{\rho}^{k+1}_{ij}=1,~\forall 0\leq k \leq N_T-1,
\end{split}
\end{equation*}
and $\tilde{y},c,d,r$ are as in~\eqref{eq:F_G_lower_bound} and~\eqref{eq:J_lower_bound_terms}.

Furthermore, for $\eta \geq 0$ and $(\lambda,y,z) \in \{0,1\}\times \mathcal{M} \times \mathcal{W}$ denote by
\begin{equation}\label{eq:S-eta}
    \mathcal{S}_{\lambda,y,z}(\eta)=\left\{\rho^*~:~\exists w^*\in \mathcal{W}~\text{s.t.}~(\rho^*,w^*)\in \underset{(1-\lambda)(A\rho+B w)=0}{\operatorname{argmin}}J_{\lambda,y,z}(\rho,w,\eta)\right\}.
\end{equation}

\begin{lemma}\label{lma:fixed_point}
    Assume that $F(t,x,\cdot)$ and $G(x,\cdot)$ are convex and continuous for $\rho\geq 0$, and $(\lambda,y,z) \in \{0,1\}\times \mathcal{M} \times \mathcal{W}$. 
    Then the set valued map $\mathcal{S}_{\lambda,y,z}:\mathcal{P}_\lambda \rightrightarrows \mathcal{P}_\lambda$ admits a fixed point; that is, there exists $\rho^* \in \mathcal{P}_\lambda$ such that $\rho^* \in \mathcal{S}_{\lambda,y,z}(\rho^*)$.
\end{lemma}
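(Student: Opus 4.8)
The plan is to verify the hypotheses of Kakutani's fixed-point theorem for the restriction $\mathcal{S}_{\lambda,y,z}\colon\mathcal{P}_\lambda\rightrightarrows\mathcal{P}_\lambda$. First I would record that $\mathcal{P}_\lambda$ is a nonempty, convex, compact subset of $\mathcal{M}$: it is the intersection of the positive cone $\{\eta\ge 0\}$, the affine set $\{\eta^0=\rho_0\}$, and either the affine mass constraints (for $\lambda=0$) or the Euclidean ball $\{|\eta|^2\le c_1\}$ (for $\lambda=1$), hence convex and closed, and in either case these constraints confine $\eta$ to a bounded set, giving compactness; nonemptiness is clear for $\lambda=0$ (the space-uniform grid function lies in $\mathcal{P}_0$) and follows for $\lambda=1$ from the inclusion $\mathcal{S}_{1,y,z}(\eta)\subseteq\mathcal{P}_1$ established below, applied to any admissible $\eta$. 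Next, $\mathcal{S}_{\lambda,y,z}(\eta)\ne\emptyset$ for every $\eta\in\mathcal{P}_\lambda$ by the first part of~\cref{lma:J_min_eta}, and $\mathcal{S}_{\lambda,y,z}(\eta)$ is convex because $J_{\lambda,y,z}(\cdot,\cdot,\eta)$ is convex (using~\cref{rmk:perspective_function} and the convexity of $F,G$) and the constraint $(1-\lambda)(A\rho+Bw)=0$ is affine, so its set of minimizers is convex, and $\mathcal{S}_{\lambda,y,z}(\eta)$ is the image of that set under the linear projection $(\rho,w)\mapsto\rho$.

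Second, I would check that $\mathcal{S}_{\lambda,y,z}$ maps $\mathcal{P}_\lambda$ into itself. Any $\rho^*\in\mathcal{S}_{\lambda,y,z}(\eta)$ satisfies $\rho^*\ge 0$ and $(\rho^*)^0=\rho_0$, since otherwise $J_{\lambda,y,z}(\rho^*,w^*,\eta)=\infty$. For $\lambda=0$, the feasibility $A\rho^*+Bw^*=0$ together with $(\rho^*)^0=\rho_0$ yields $\sum_{ij}(\rho^*)^k_{ij}=\frac1{h^2}$ for all $k$ exactly as in~\cite[Section~3]{silva19}, so $\rho^*\in\mathcal{P}_0$. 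For $\lambda=1$, testing the minimality of $(\rho^*,w^*)$ against the pair $(\bar\rho,0)$ is decisive: since $\bar\rho^{k+1}_{ij}=1>0$ and $0\in K$ we get $E_h(t_k,x_{ij},1,0,\cdot)=0$, whence $J_{1,y,z}(\bar\rho,0,\eta)=\frac{|\bar\rho|^2}{2}-\bar\rho\cdot y+\sum F(t_k,x_{ij},1)+\frac1{\Delta t}\sum G(x_{ij},1)$; combined with the coercive lower bound $J_{1,y,z}(\rho^*,w^*,\eta)\ge\frac{|\rho^*|^2}{4}-|\tilde y|^2-|z|^2-c$ obtained in the proof of~\cref{lma:J_min_eta}, this gives precisely $|\rho^*|^2\le c_1$, i.e.\ $\rho^*\in\mathcal{P}_1$.

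The core of the argument --- and the step I expect to be the main obstacle --- is the closed-graph property of $\mathcal{S}_{\lambda,y,z}$ (equivalently, since the target is compact, upper semicontinuity). Let $\eta_n\to\eta$ in $\mathcal{P}_\lambda$, $\rho_n\in\mathcal{S}_{\lambda,y,z}(\eta_n)$ with $\rho_n\to\rho$, and pick $w_n$ with $(\rho_n,w_n)$ a minimizer for $\eta_n$. I would first bound $\{w_n\}$ uniformly: testing minimality against a fixed feasible pair of finite cost --- the pair $(\tilde\rho,\tilde w)$ of~\cite[Lemma~3.1]{silva19} when $\lambda=0$, and $(\bar\rho,0)$ when $\lambda=1$ --- gives $J_{\lambda,y,z}(\rho_n,w_n,\eta_n)\le C$ uniformly in $n$ (here one uses $0\le(\eta_n)^{k+1}_{ij}\le\frac1{h^2}$ to keep the $E_h$-cost of the test pair bounded as $\eta_n$ varies), and then the estimates in the proof of~\cref{lma:J_min_eta} --- the bound~\eqref{eq:w_unibound} when $\lambda=0$, the quadratic coercivity when $\lambda=1$ --- bound $\{w_n\}$; pass to a subsequence with $w_n\to w$. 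The closed conditions $\rho\ge 0$, $\rho^0=\rho_0$, $(1-\lambda)(A\rho+Bw)=0$ make $(\rho,w)$ feasible for $\eta$. Optimality of $(\rho,w)$ follows from a $\liminf$--$\limsup$ sandwich: on one hand $J_{\lambda,y,z}(\cdot,\cdot,\cdot)$ is jointly lower semicontinuous --- for the $E_h$ terms this is seen by writing $E_h(t,x,\rho,w,\eta)=(\eta+\epsilon)^{\alpha(\beta'-1)}\,\widehat E(\rho,w)$ with the scalar factor continuous and bounded below by $\epsilon^{\alpha(\beta'-1)}>0$ and $\widehat E\ge 0$ the convex lower semicontinuous perspective function $\widehat E(\rho,w)=\frac{|w|^{\beta'}}{\beta'\rho^{\beta'-1}}$ for $\rho>0,w\in K$ (extended by $0$ at $(0,0)$ and $+\infty$ otherwise), as given by~\cref{lma:L_h} and~\cref{rmk:perspective_function}; for $F,G$ it is their continuity --- so $J_{\lambda,y,z}(\rho,w,\eta)\le\liminf_n J_{\lambda,y,z}(\rho_n,w_n,\eta_n)$. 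On the other hand, fixing a minimizer $(\rho',w')$ for $\eta$ (finite cost by~\cref{lma:J_min_eta}) and noting that $\eta\mapsto J_{\lambda,y,z}(\rho',w',\eta)$ is continuous wherever it is finite (on each active cell $E_h$ equals either $0$ or the continuous expression above), one gets $\limsup_n J_{\lambda,y,z}(\rho_n,w_n,\eta_n)\le\limsup_n J_{\lambda,y,z}(\rho',w',\eta_n)=J_{\lambda,y,z}(\rho',w',\eta)=\inf_{(1-\lambda)(A\rho+Bw)=0}J_{\lambda,y,z}(\cdot,\cdot,\eta)$. Chaining the two inequalities shows $(\rho,w)$ minimizes $J_{\lambda,y,z}(\cdot,\cdot,\eta)$, so $\rho\in\mathcal{S}_{\lambda,y,z}(\eta)$, and Kakutani's theorem then produces a fixed point $\rho^*\in\mathcal{P}_\lambda$. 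The delicate bookkeeping lies in the uniform $w$-bound (controlling the test pair's $E_h$-cost uniformly in $\eta_n$) and in the correct one-sided continuity of the $\eta$-dependence, which is exactly where the explicit form of $E_h$ from~\cref{lma:L_h} is used.
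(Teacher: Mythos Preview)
Your proposal is correct and follows essentially the same route as the paper: verify the hypotheses of Kakutani's theorem by checking that $\mathcal{S}_{\lambda,y,z}$ has nonempty closed convex values in $\mathcal{P}_\lambda$ (using \cref{lma:J_min_eta} and the test pairs $(\tilde\rho,\tilde w)$ or $(\bar\rho,0)$) and a closed graph (via the factorization $E_h=(\eta+\epsilon)^{\alpha(\beta'-1)}\tilde\ell(\rho,w)$ for joint lower semicontinuity, combined with continuity in $\eta$ at a fixed finite-cost competitor). The only cosmetic differences are that the paper uses the single test pair $(\tilde\rho,\tilde w)$ in the closed-graph step for both values of $\lambda$, and compares against an arbitrary feasible $(\rho,w)$ rather than a minimizer in the $\limsup$ half of the sandwich; neither changes the substance.
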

\begin{proof}
    Our strategy is to apply Kakutani's fixed point theorem~\cite{kakutani41generalization}. Hence, fix a triple $(\lambda,y,z) \in \{0,1\}\times \mathcal{M} \times \mathcal{W}$.
    \begin{enumerate}
        \item \cref{lma:J_min_eta} yields that $\mathcal{S}_{\lambda,y,z}(\eta)\neq \emptyset$ for all $\eta\geq 0$.
        \begin{enumerate}
            \item When $\lambda=0$, we have that for all $\rho^* \in \mathcal{S}_{\lambda,y,z}(\eta)$ there exists $w^* \in \mathcal{W}$ such that
            \begin{equation*}
                (\rho^*,w^*)\in \underset{A \rho +Bw =0}{\operatorname{argmin}} J(\rho,w,\eta). 
            \end{equation*}
            Hence, $\rho^* \in \mathcal{P}_0$, and so $\mathcal{S}_{0,y,z}(\eta) \subset \mathcal{P}_0$ for all $\eta \in \mathcal{P}_0$.
            \item When $\lambda=1$, we have that for all $\rho^* \in \mathcal{S}_{\lambda,y,z}(\eta)$ there exists $w^* \in \mathcal{W}$ such that
            \begin{equation*}
                (\rho^*,w^*)\in \underset{\rho,w}{\operatorname{argmin}}~J_{1,y,z}(\rho,w,\eta),
            \end{equation*}
            and so
            \begin{equation*}
                J_{1,y,z}(\rho^*,w^*,\eta) \leq J_{1,y,z}(\bar{\rho},0,\eta).
            \end{equation*}
            Hence, applying lower bounds on $J_{1,y,z}$ as in~\cref{lma:J_min_eta} we obtain
            \begin{equation*}
                \frac{|\rho^*|^2}{4}+\frac{|w^*|^2}{4}\leq |\tilde{y}|^2+|z|^2+c+J_{1,y,z}(\bar{\rho},0,\eta).
            \end{equation*}
            Noting that
            \begin{equation*}
            \begin{split}
                &J_1(\bar{\rho},0,\eta)\\
                =&\frac{|\bar{\rho}|^2}{2}-\bar{\rho}\cdot y+\sum\limits_{\substack{0\leq i,j \leq N_h-1\\ 0\leq k\leq N_T-1}} F(t_k,x_{ij},1)
    +\frac{1}{\Delta t}\sum\limits_{0\leq i,j \leq N_h-1} G(x_{ij}, 1),
            \end{split}
            \end{equation*}
            we conclude that $\rho^* \in \mathcal{P}_1$, and so $\mathcal{S}_{1,y,z}(\eta) \subset \mathcal{P}_1$ for all $\eta \in \mathcal{P}_1$.
        \end{enumerate}

        \vskip 0.5cm

        Summarizing (a), (b) above, we conclude that $\mathcal{S}_{\lambda,y,z}(\eta) \subset \mathcal{P}_\lambda$ for all $\eta \in \mathcal{P}_\lambda$. Additionally, the convexity and continuity of $F(t,x,\cdot)$ and $G(x,\cdot)$ yield that $J_{\lambda,y,z}(\cdot,\cdot,\eta)$ is convex and lower semicontinuous, and so
        \begin{equation*}
            \underset{(1-\lambda)(A\rho+B w)=0}{\operatorname{argmin}} J_{\lambda,y,z}(\rho,w,\eta)
        \end{equation*}
        is a closed convex set. Thus, $\mathcal{S}_{\lambda,y,z}(\eta)$ is also closed and convex.
        \item Assume that $\{(\eta_n,\rho_n)\} \subset \mathcal{P}_\lambda \times \mathcal{P}_\lambda$ is such that $\rho_n \in \mathcal{S}_{\lambda,y,z}(\eta_n)$ for all $n\geq 1$, and
        \begin{equation*}
            \lim\limits_{n \to \infty}(\eta_n,\rho_n) =(\hat{\eta},\hat{\rho}).
        \end{equation*}
        Since $\mathcal{P}_\lambda$ is compact, we have that $\hat{\eta} \in \mathcal{P}_\lambda$. Furthermore, let $w_n\in \mathcal{W}$ be such that 
        \begin{equation*}
            (\rho_n,w_n) \in \underset{(1-\lambda)(A\rho +B w)=0}{\operatorname{argmin}} J_{\lambda,y,z}(\rho,w,\eta_n).
        \end{equation*}
        Fix a $(\tilde{\rho},\tilde{w})\in \mathcal{M} \times \mathcal{W}$ such that~\eqref{eq:tilde_rho>0} and \eqref{eq:tilde_rho_sol} hold. Then we have that
        \begin{equation*}
            J_{\lambda,y,z}(\rho_n,w_n,\eta_n)\leq J_{\lambda,y,z}(\tilde{\rho},\tilde{w},\eta_n),\quad \forall n\geq 1.
        \end{equation*}
        Furthermore, we have that
        \begin{equation*}
            \lim_{n\to \infty} J_{\lambda,y,z}(\tilde{\rho},\tilde{w},\eta_n)=J_{\lambda,y,z}(\tilde{\rho},\tilde{w},\hat{\eta})<\infty.
        \end{equation*}
        Hence, there exists $C>0$ such that
        \begin{equation*}
            -\lambda \left(\frac{|y|^2}{2}+\frac{|z|^2}{2} \right)+ J(\rho_n,w_n,\eta_n)\leq J_{\lambda,y,z}(\rho_n,w_n,\eta_n) \leq C,\quad \forall n\geq 1,
        \end{equation*}
        and so
        \begin{equation*}
            J(\rho_n,w_n,\eta_n) \leq C+\lambda \left(\frac{|y|^2}{2}+\frac{|z|^2}{2} \right),\quad \forall n\geq 1.
        \end{equation*}
        Since $\{\rho_n\}$ is a bounded sequence and $F(t,x,\cdot)$, $G(x,\cdot)$ are continuous, we obtain that
        \begin{equation}\label{eq:E_h_unibound_2}
           0\leq  E_h(t_k,x_{ij},(\rho_n)^{k+1}_{ij},(w_n)^k_{ij},(\eta_n)^{k+1}_{ij}) \leq \Tilde{C},
        \end{equation}
        for $0\leq i,j \leq N_h-1$, $0\leq k \leq N_T-1$, and some $\Tilde{C}>0$. Next, denote by
        \begin{equation*}
            \ell(v_1,v_2,v_3,v_4)=\begin{cases}
                \frac{(v_1^2+v_2^2+v_3^2+v_4^2)^{\beta'/2}}{\beta'},\quad (v_1,v_2,v_3,v_4) \in K,\\
                \infty,\quad \text{otherwise}.
            \end{cases}
        \end{equation*}
        The corresponding recession function is then
        \begin{equation*}
            \tilde{\ell}(\rho,w_1,w_2,w_3,w_4) =\begin{cases}
                \frac{(w_1^2+w_2^2+w_3^2+w_4^2)^{\beta'/2}}{\beta' \rho^{\beta'-1}},\quad (w_1,w_2,w_3,w_4) \in K,\\
                0,\quad (\rho,w_1,w_2,w_3,w_4)=(0,0,0,0,0),\\
                \infty,\quad \text{otherwise}.
            \end{cases}
        \end{equation*}
        Note that for $\eta\geq 0$ we have that 
        \begin{equation}\label{eq:ell_L}
            \begin{split}
                L_h(t,x,v_1,v_2,v_3,v_4,\eta)=&(\eta+\epsilon)^{\alpha(\beta'-1)} \ell(v_1,v_2,v_3,v_4),\\
                E_h(t,x,w_1,w_2,w_3,w_4,\eta)=&(\eta+\epsilon)^{\alpha(\beta'-1)} \tilde{\ell}(\rho,w_1,w_2,w_3,w_4).
            \end{split}
        \end{equation}
        Taking into account $\eta_n \geq 0$ and~\eqref{eq:E_h_unibound_2} we obtain
        \begin{equation}
            0\leq \tilde{\ell}((\rho_n)^{k+1}_{ij},(w_n)^k_{ij})\leq \frac{\Tilde{C}}{\epsilon^{\alpha(\beta'-1)}}.
        \end{equation}
        In particular, we find that~\eqref{eq:w_unibound} holds. Thus $\{(\rho_n,w_n)\}$ is precompact, and, possibly through a subsequence, we have that
        \begin{equation*}
            \lim_{n \to \infty} (\rho_n,w_n)=(\hat{\rho},\hat{w}).
        \end{equation*}
        Hence, we have that
        \begin{equation*}
        \begin{split}
            \hat{\rho}^0=&\lim_{n \to \infty} (\rho_n)^0=\rho_0, \quad \hat{\rho}=\lim_{n \to \infty} \rho_n \geq 0,\\
            (1-\lambda)(A \hat{\rho}+ B \hat{w})=&\lim_{n \to \infty} (1-\lambda)(A\rho_n+B w_n)=0.
        \end{split}
        \end{equation*}
        Additionally, using the lower semicontinuity of $\Tilde{\ell}$, we find that
        \begin{equation*}
            \begin{split}
                &\liminf_{n\to \infty} E_h(t_k,x_{ij},(\rho_n)^{k+1}_{ij},(w_n)^k_{ij},(\eta_n)^{k+1}_{ij}) \\
                \geq& \liminf_{n \to \infty} ((\eta_n)^{k+1}_{ij}+\epsilon)^{\alpha(\beta'-1)} \cdot \liminf_{n \to \infty} \tilde{\ell}((\rho_n)^{k+1}_{ij},(w_n)^k_{ij})\\
                \geq & (\hat{\eta}^{k+1}_{ij}+\epsilon)^{\alpha(\beta'-1)} \tilde{\ell}(\hat{\rho}^{k+1}_{ij},\hat{w}^{k}_{ij})=E_h(t_k,x_{ij},\hat{\rho}^{k+1}_{ij},\hat{w}^{k}_{ij},\hat{\eta}^{k+1}_{ij}),
            \end{split}
        \end{equation*}
        for all $0\leq i,j \leq N_h-1$, and $0\leq l \leq N_T-1$. But then we have that
        \begin{equation*}
            \begin{split}
                &\liminf_{n \to \infty} J_{\lambda,y,z}(\rho_n,w_n,\eta_n) \\
                \geq& \liminf_{n \to \infty} \lambda \left(\frac{|\rho_n|^2}{2}-\rho_n \cdot y+\frac{|w_n|^2}{2}-w_n \cdot z \right)\\
                &+ \sum_{\substack{0\leq i,j \leq N_h-1\\0\leq k \leq N_T-1}} \liminf_{n\to \infty} E_h(t_k,x_{ij},(\rho_n)^{k+1}_{ij},(w_n)^k_{ij},(\eta_n)^{k+1}_{ij})\\
                &+\sum_{\substack{0\leq i,j \leq N_h-1\\0\leq k \leq N_T-1}} \liminf_{n\to \infty} F(t_k,x_{ij},(\rho_n)^{k+1}_{ij})\\
                &+\frac{1}{\Delta t} \sum_{0\leq i,j \leq N_h-1} \liminf_{n\to \infty} G(x_{ij},(\rho_n)^{N_T}_{ij})\\
                \geq& \lambda \left(\frac{|\hat{\rho}|^2}{2}-\hat{\rho} \cdot y+\frac{|\hat{w}|^2}{2}-\hat{w} \cdot z \right)\\
                &+\sum_{\substack{0\leq i,j \leq N_h-1\\0\leq k \leq N_T-1}} E_h(t_k,x_{ij},\hat{\rho}^{k+1}_{ij},\hat{w}^k_{ij},\hat{\eta}^{k+1}_{ij})\\
                &+\sum_{\substack{0\leq i,j \leq N_h-1\\0\leq k \leq N_T-1}} F(t_k,x_{ij},\hat{\rho}^{k+1}_{ij})+\frac{1}{\Delta t} \sum_{0\leq i,j \leq N_h-1} G(x_{ij},\hat{\rho}^{N_T}_{ij})\\
                =&J_{\lambda,y,z}(\hat{\rho},\hat{w},\hat{\eta}).
            \end{split}
        \end{equation*}
        Now fix $(\rho,w) \in \mathcal{M}\times \mathcal{W}$ be such that $\rho^0=\rho_0$, and $(1-\lambda)(A\rho + Bw) = 0$. Our goal is to prove that
        \begin{equation*}
            J_{\lambda,y,z}(\hat{\rho},\hat{w},\hat{\eta})\leq J_{\lambda,y,z}(\rho,w,\hat{\eta}),
        \end{equation*}
        so that $\hat{\rho} \in \mathcal{S}_{\lambda,y,z}(\hat{\eta})$.
        
        If $J_{\lambda,y,z}(\rho,w,\hat{\eta})=\infty$, the inequality is trivially true. Hence, assume that $J_{\lambda,y,z}(\rho,w,\hat{\eta})<\infty$. Note that this implies that $J_{\lambda,y,z}(\rho,w,\eta)<\infty$ for all $\eta\geq 0$. We have that
        \begin{equation*}
            J_{\lambda,y,z}(\rho_n,w_n,\eta_n)\leq J_{\lambda,y,z}(\rho,w,\eta_n),\quad \forall n\geq 1,
        \end{equation*}
        but then
        \begin{equation*}
            \begin{split}
                J_{\lambda,y,z}(\hat{\rho},\hat{w},\hat{\eta})\leq& \liminf_{n\to \infty} J_{\lambda,y,z}(\rho_n,w_n,\eta_n) \leq \liminf_{n\to \infty} J_{\lambda,y,z}(\rho,w,\eta_n)\\
                =&\lim_{n\to \infty} J_{\lambda,y,z}(\rho,w,\eta_n)=J_{\lambda,y,z}(\rho,w,\hat{\eta}).
            \end{split}
        \end{equation*}
    \end{enumerate}
    Hence, $\mathcal{S}_{\lambda,y,z}$ maps all $\eta \in \mathcal{P}_\lambda$ to nonempty closed convex subsets of $\mathcal{P}_\lambda$ and has a closed graph. Therefore, Kakutani's theorem~\cite{kakutani41generalization} implies that $\mathcal{S}_{\lambda,y,z}$ has a fixed point.
\end{proof}

\begin{corollary}\label{crl:K_max_mon}
Assume that $F(t,x,\cdot)$ and $G(x,\cdot)$ are convex and continuous for $\rho \geq 0$, and for $(\rho,w)\in \mathcal{M}\times \mathcal{W}$ denote by
\begin{equation}\label{eq:K}
    \mathcal{K}(\rho,w)=\begin{cases}
        \partial_{(\rho,w)} J(\rho,w,\eta)\Big|_{\eta=\rho},\quad\rho \geq 0,\\
        \emptyset,\quad \text{otherwise}.
    \end{cases}
\end{equation}
Then $\mathcal{K}$ is maximally monotone.
\end{corollary}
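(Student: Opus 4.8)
\emph{Strategy.} The plan is to show separately that $\mathcal{K}$ is monotone and that $\mathrm{Id}+\mathcal{K}$ is surjective; Minty's surjectivity criterion for maximal monotonicity then gives the claim. Monotonicity of $\mathcal{K}$ is the ``integrated'' counterpart of the pointwise Lasry--Lions inequality proved in \cref{lma:LL_discrete}, while surjectivity of $\mathrm{Id}+\mathcal{K}$ is exactly what \cref{lma:fixed_point} delivers for $\lambda=1$.

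\emph{Monotonicity of $\mathcal{K}$.} Since $F(t,x,\cdot)$, $G(x,\cdot)$, and $\mathbf{1}_{\rho^0=\rho_0}$ are independent of $\eta$ and act coordinatewise in $(\rho,w)$, the subdifferential sum rule --- applicable because the pair from \eqref{eq:tilde_rho>0}--\eqref{eq:tilde_rho_sol} lies in the relative interior of the domains of all summands of $J(\cdot,\cdot,\eta)$ --- decomposes $\mathcal{K}$ into the sum of the (evidently monotone) coordinatewise subdifferentials of the $F$-, $G$- and indicator parts and the operator
\begin{equation*}
\mathcal{K}_E(\rho,w)=\sum_{i,j,k}\partial_{(\rho^{k+1}_{ij},w^k_{ij})}E_h\bigl(t_k,x_{ij},\cdot,\cdot,\eta^{k+1}_{ij}\bigr)\Big|_{\eta=\rho}.
\end{equation*}
It therefore suffices to prove that, at each grid point, $(\rho,w)\mapsto\partial_{(\rho,w)}E_h(t,x,\cdot,\cdot,\eta)\big|_{\eta=\rho}$ is monotone on $\{\rho\ge0,\,w\in K\}$. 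For $\rho>0$, the perspective-subdifferential calculus of \cite{combettes18perpsective} applied to \eqref{eq:E_h}, together with the Legendre duality \eqref{eq:H_h^*} (recalling that $H_h(t,x,\cdot,\eta)$ is $C^1$ and convex), shows that a pair $(\xi,\zeta)$ lies in this operator's value at $(\rho,w)$ if and only if there is $q\in\mathbb{R}^4$ with $\zeta=-q$, $\xi=-H_h(t,x,q,\rho)$, and $w=-\rho\,\nabla_q H_h(t,x,q,\rho)$. Given two such data, attached to $q_1,q_2$, I would then examine, along the segment $(\rho(s),q(s))=(1-s)(\rho_1,q_1)+s(\rho_2,q_2)$, $s\in[0,1]$, the scalar $\Theta(s)=-H_h(t,x,q(s),\rho(s))\,\dot\rho+\dot q\cdot\rho(s)\,\nabla_q H_h(t,x,q(s),\rho(s))$, with $\dot\rho=\rho_2-\rho_1$ and $\dot q=q_2-q_1$. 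A direct computation (in which the cross-terms $\mp\dot\rho\,(\nabla_q H_h\cdot\dot q)$ cancel, which is why the off-diagonal block that survives is $\tfrac{\rho}{2}\nabla_q\partial_\rho H_h$, matching the matrix shown to be positive semidefinite in the proof of \cref{lma:LL_discrete}) gives that $\Theta'(s)$ equals the associated quadratic form at $(\dot\rho,\dot q)$, and then $w_a=-\rho_a\nabla_q H_h(t,x,q_a,\rho_a)$ yields
\begin{equation*}
\bigl\langle(\xi_1,\zeta_1)-(\xi_2,\zeta_2),\,(\rho_1,w_1)-(\rho_2,w_2)\bigr\rangle=\Theta(1)-\Theta(0)=\int_0^1\Theta'(s)\,ds\ge0.
\end{equation*}
The degenerate case $\rho_a=0$ forces $w_a=0$, $\partial_{(\rho,w)}E_h(t,x,0,0,\eta)$ is a fixed convex cone independent of $\eta$, and testing it against an interior datum reduces monotonicity --- after Euler's identity $q\cdot\nabla_q H_h=\beta H_h$, valid because $\psi$ in \eqref{eq:psi} is $\beta$-homogeneous --- to $(\beta-1)\rho_2H_h(t,x,q_2,\rho_2)\ge0$, true since $\beta>1$. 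Summing over grid points gives monotonicity of $\mathcal{K}_E$, hence of $\mathcal{K}$.

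\emph{Maximality of $\mathcal{K}$.} Fix $(y,z)\in\mathcal{M}\times\mathcal{W}$. By \cref{lma:fixed_point} with $\lambda=1$ there are $\rho^*\in\mathcal{P}_1$ and $w^*\in\mathcal{W}$ with $(\rho^*,w^*)\in\operatorname{argmin}_{(\rho,w)}J_{1,y,z}(\rho,w,\rho^*)$, the minimization being unconstrained because the constraint in \eqref{eq:S-eta} is void for $\lambda=1$. Using \eqref{eq:J_lyz} for $J_{1,y,z}$, the first-order optimality condition for this convex problem and the sum rule (the quadratic summand being smooth) give $0\in(\rho^*-y,\,w^*-z)+\partial_{(\rho,w)}J(\rho^*,w^*,\eta)\big|_{\eta=\rho^*}$. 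As $\rho^*\ge0$, definition \eqref{eq:K} yields $(y-\rho^*,\,z-w^*)\in\mathcal{K}(\rho^*,w^*)$, i.e. $(y,z)\in(\mathrm{Id}+\mathcal{K})(\rho^*,w^*)$. Hence $\operatorname{range}(\mathrm{Id}+\mathcal{K})=\mathcal{M}\times\mathcal{W}$, and by Minty's theorem, together with the monotonicity above, $\mathcal{K}$ is maximally monotone.

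\emph{Main obstacle.} The delicate step is monotonicity. The ``cheap'' bound obtained by merely adding the two subgradient inequalities for $J(\cdot,\cdot,\eta_a)$ only estimates the duality product from below by a sum of products $\bigl((\rho_1+\epsilon)^{\alpha(\beta'-1)}-(\rho_2+\epsilon)^{\alpha(\beta'-1)}\bigr)$ times a difference of values of the $(\rho,w)$-part of $E_h$, which is not sign-definite; one genuinely has to use the explicit form of the subgradients of $E_h$ and integrate the pointwise Lasry--Lions inequality along a segment. A secondary point is regularity: for \eqref{eq:H_h} with $1<\beta\le2$ the Hamiltonian is only $C^1$, so one must verify that $s\mapsto\nabla_qH_h(t,x,q(s),\rho(s))$ is absolutely continuous along segments before invoking the fundamental theorem of calculus --- which holds because $s\mapsto|s|^{\beta-1}\operatorname{sign}(s)$ is absolutely continuous for $\beta>1$; alternatively one regularizes $H_h$ and passes to the limit.
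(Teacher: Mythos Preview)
Your overall plan coincides with the paper's: apply Minty's theorem, get surjectivity of $\mathrm{Id}+\mathcal{K}$ from \cref{lma:fixed_point} with $\lambda=1$, and reduce monotonicity to the grid-point operator $e_h(t,x,\rho,w)=\partial_{(\rho,w)}E_h(t,x,\rho,w,\eta)\big|_{\eta=\rho}$ via the perspective-subdifferential formula. Your surjectivity argument is identical to the paper's. For the interior case $\rho_1,\rho_2>0$, your integration of $\Theta'(s)$ along the segment is just the explicit proof of the standard fact the paper invokes directly (``monotonicity of a $C^1$ map $\Leftrightarrow$ symmetric part of its Jacobian is positive semidefinite''), so the two arguments are equivalent.

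There is, however, a genuine gap in your treatment of the boundary case $\rho_1=0$. The subdifferential $\partial_{(\rho,w)}E_h(t,x,0,0,\eta)$ equals $\{(\mu,-q):\mu+H_h(t,x,q,0)\le 0\}$ (a convex set, not a cone, since $H_h$ is $\beta$-homogeneous in $q$ with $\beta>1$). Monotonicity must be checked for \emph{every} element $(\mu_1,-q_1)$ of this set against an interior datum $(\rho_2,w_2)$ with attached $q_2$. After the worst-case bound $\mu_1=-H_h(t,x,q_1,0)$ and Euler's identity applied to the $q_2$-term, the quantity to control is
\[
\rho_2\bigl[H_h(t,x,q_1,0)+(\beta-1)H_h(t,x,q_2,\rho_2)-q_1\cdot\nabla_qH_h(t,x,q_2,\rho_2)\bigr],
\]
which still contains $q_1$ and does \emph{not} reduce to $(\beta-1)\rho_2H_h(t,x,q_2,\rho_2)\ge 0$ unless $q_1=0$. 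So the Euler-identity shortcut only handles the single element $(0,0)\in\partial E_h(0,0)$.

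The paper's fix is short and robust: use $\mu_1\le -H_h(t,x,q_1,0)=-H_h(t,x,q_1,\rho_1)$ together with $\rho_2-\rho_1>0$ to bound the boundary duality product from below by the \emph{same} expression that appears in the interior case with $(\rho_1,q_1)$ and $(\rho_2,q_2)$, namely the monotonicity of the map $(\rho,q)\mapsto(-H_h(t,x,q,\rho),\rho\nabla_qH_h(t,x,q,\rho))$ on $\{\rho\ge 0\}$ that you already established. In other words, the boundary case is handled by reduction to the interior monotonicity, not by a separate homogeneity trick. Replacing your Euler-identity sentence with this reduction closes the gap; the rest of your argument is correct and aligned with the paper.
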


\vskip 0.5cm

\begin{remark}
Note that in~\eqref{eq:K} we first differentiate $J$ with respect to $(\rho,w)$ and then plug in $\eta=\rho$. This should not be confused with $\partial_{(\rho,w)} J(\rho,w,\rho)$.
\end{remark}

\vskip 0.5cm

\begin{proof}[Proof of~\cref{crl:K_max_mon}]
    By Minty's theorem~\cite[Theorem 3.5.8]{aubin90set} we have to prove that $\mathcal{K}$ is monotone, and $I+\mathcal{K}$ is surjective. We start with the latter.
    \begin{enumerate}
        \item \textit{Surjectivity.} Assume that $(y,z)\in \mathcal{M}\times \mathcal{W}$ are arbitrary. By~\cref{lma:fixed_point} we have that there exists $\rho^* \in \mathcal{P}_1$ such that $\rho^* \in \mathcal{S}_{1,y,z}(\rho^*)$. Hence, there exists $w^* \in \mathcal{W}$ such that
    \begin{equation*}
        (\rho^*,w^*) \in \underset{\rho,w}{\operatorname{argmin}}~J_{1,y,z}(\rho,w,\rho^*).
    \end{equation*}
    Consequently, we have that
    \begin{equation*}
        (0,0) \in \partial_{(\rho,w)} J_{1,y,z}(\rho^*,w^*,\eta)\Big|_{\eta=\rho^*}.
    \end{equation*}
    Furthermore, we have that
    \begin{equation*}
        J_{1,y,z}(\rho,w,\eta)=\frac{|\rho|^2}{2}-\rho \cdot y+\frac{|w|^2}{2}-w \cdot z+J(\rho,w,\eta),
    \end{equation*}
    and
    \begin{equation*}
        (\rho,w) \mapsto \frac{|\rho|^2}{2}-\rho \cdot y+\frac{|w|^2}{2}-w \cdot z
    \end{equation*}
    is continuously differentiable. Hence, we obtain
    \begin{equation*}
        \partial_{\rho,w} J_{1,y,z}(\rho,w,\eta)=(\rho, w)- (y, z) + \partial_{(\rho,w)}J(\rho,w,\eta),
    \end{equation*}
    and so
    \begin{equation*}
        (0,0) \in (\rho^*,w^*)+\mathcal{K}(\rho^*,w^*)-(y,z) \Longleftrightarrow (y,z)\in (\rho^*,w^*)+\mathcal{K}(\rho^*,w^*).
    \end{equation*}
        \item \textit{Monotonicity.} The convexity of $F(t,x,\cdot)$ and $G(x,\cdot)$ and the separable structure of $J$ at the grid points yield that it is sufficient to prove that
        \begin{equation*}
            e_h(t,x,\rho,w)=\begin{cases}
                \partial_{\rho,w} E_h (t,x,\rho,w,\eta)\Big|_{\eta=\rho},\quad \rho \geq 0\\
                \emptyset,\quad \text{otherwise}
            \end{cases}
        \end{equation*}
        is monotone. Applying~\cite[Proposition 2.3]{combettes18perpsective}, we obtain
        \begin{equation}\label{eq:e_h_expanded}
        \begin{split}
            &e_h(t,x,\rho,w)\\
            =&\begin{cases}
                \left\{ \left(-H_h(t,x,q,\rho),-q\right)~:~q\in -\partial_v L_h(t,x,\frac{w}{\rho},\rho) \right\},\quad \rho>0,\\
                \left\{ (\mu,-q)~:~\mu+H_h(t,x,q,\rho)\leq 0\right\},\quad (\rho,w)=(0,0),\\
                \emptyset, \quad \text{otherwise}.
            \end{cases}
        \end{split}
        \end{equation}
        Now assume that
        \begin{equation*}
            (y_i,z_i) \in e_h(t,x,\rho_i,w_i),\quad i=1,2.
        \end{equation*}
        We have to show that
        \begin{equation*}
            (y_2-y_1) (\rho_2-\rho_1)+(z_2-z_1)\cdot (w_2-w_1) \geq 0.
        \end{equation*}
        Since the case $(\rho_1,w_1)=(\rho_2,w_2)$ is trivial, we assume that $(\rho_1,w_1)\neq (\rho_2,w_2)$. Hence, up to swapping $(\rho_1,w_1)$ and $(\rho_2,w_2)$, there are two possibilities:
        \begin{enumerate}
            \item $\rho_1,\rho_2>0$. In this case,~\eqref{eq:e_h_expanded} yields that there exist $q_1,q_2$ such that
            \begin{equation*}
            y_i=-H_h(t,x,q_i,\rho_i),~z_i=-q_i,~q_i \in -\partial_v L_h(t,x,\frac{w_i}{\rho_i},\rho_i),\quad i=1,2.
            \end{equation*}
            But then the Legendre duality yields that
            \begin{equation*}
                w_i=-\rho_i \nabla_q H_h(t,x,q_i,\rho_i),\quad i=1,2,
            \end{equation*}
            and so we have to prove that
            \begin{equation*}
            \begin{split}
                &(-H_h(t,x,q_2,\rho_2)+H_h(t,x,q_1,\rho_1))(\rho_2-\rho_1)\\
                &+(\rho_2 \nabla_q H_h(t,x,q_2,\rho_2)-\rho_1 \nabla_q H_h(t,x,q_1,\rho_1))\cdot(q_2-q_1) \geq 0.
            \end{split}
            \end{equation*}
            This inequality is equivalent to the monotonicity of the map
            \begin{equation}\label{eq:H_h_mon_map}
                (\rho,q) \mapsto (-H_h(t,x,q,\rho),\rho \nabla_q H_h(t,x,\rho,q)),\quad \rho \geq 0,~q\in \mathbb{R}^4.
            \end{equation}
            Since the map is continuously differentiable, its monotinicity is equivalent to the positive definiteness of the symmetric part of its Jacobian. Note that the Jacobian is given by
            \begin{equation*}
                \begin{pmatrix}
                    -\partial_\rho H_h & -\nabla_q^\top H_h\\
                    \nabla_q H_h+ \rho \nabla_q \partial_\rho H_h & \rho \nabla^2_q H_h
                \end{pmatrix},
            \end{equation*}
            and so its symmetric part is
            \begin{equation*}
                \begin{pmatrix}
                    -\partial_\rho H_h & \frac{1}{2}\rho \nabla_q^\top \partial_\rho H_h\\
                    \frac{1}{2}\rho \nabla_q \partial_\rho H_h & \rho \nabla^2_q H_h
                \end{pmatrix}.
            \end{equation*}
            Thus, the monotonicity of~\eqref{eq:H_h_mon_map} is equivalent to the Lasry-Lions monotonicity condition of $H_h$, which is proven in Lemma~\eqref{lma:LL_discrete}.
            \item $\rho_1=0,~\rho_2>0$. In this case,~\eqref{eq:e_h_expanded} yields
            \begin{equation*}
                w_1=0,~y_1\leq -H_h(t,x,q_1,0),~z_1=-q_1,
            \end{equation*}
            and
            \begin{equation*}
               y_2=-H_h(t,x,q_2,\rho_2),~z_2=-q_2,~q_2 \in -\partial_v L_h(t,x,\frac{w_2}{\rho_2},\rho_2).
            \end{equation*}
            Hence, we have that
            \begin{equation*}
                w_i=-\rho_i \nabla_q H_h(t,x,q_i,\rho_i),\quad i=1,2,
            \end{equation*}
            and we need to show that
            \begin{equation*}
            \begin{split}
                &(-H_h(t,x,q_2,\rho_2)-y_1)(\rho_2-\rho_1)\\
                &+(\rho_2 \nabla_q H_h(t,x,q_2,\rho_2)-\rho_1 \nabla_q H_h(t,x,q_1,\rho_1))\cdot(q_2-q_1) \geq 0.
            \end{split}
            \end{equation*}
            But inequalities $y_1\leq -H_h(t,x,q_1,0)$ and $\rho_2-\rho_1=\rho_2>0$ yield that
            \begin{equation*}
            \begin{split}
                &(-H_h(t,x,q_2,\rho_2)-y_1)(\rho_2-\rho_1)\\
                &+(\rho_2 \nabla_q H_h(t,x,q_2,\rho_2)-\rho_1 \nabla_q H_h(t,x,q_1,\rho_1))\cdot(q_2-q_1) \\
                \geq &(-H_h(t,x,q_2,\rho_2)+H_h(t,x,q_1,\rho_1))(\rho_2-\rho_1)\\
                &+(\rho_2 \nabla_q H_h(t,x,q_2,\rho_2)-\rho_1 \nabla_q H_h(t,x,q_1,\rho_1))\cdot(q_2-q_1)\geq 0,
            \end{split}
            \end{equation*}
            where the last inequality follows from the monotonicity of~\eqref{eq:H_h_mon_map} proven in (a).
        \end{enumerate}
    \end{enumerate}
\end{proof}

\subsection{First-order optimality conditions}

\begin{lemma}\label{lma:KKT}
Let $\eta \geq 0$, and assume that $F(t,x,\cdot)$ and $G(x,\cdot)$ are convex and continuous for $\rho \geq 0$. Then the following statements are true.
\begin{enumerate}
    \item $(\rho,w) \in \mathcal{M}\times \mathcal{W}$ is a minimizer for~\eqref{eq:J_min_eta} with $\lambda=0$ if and only if
\begin{equation}\label{eq:KKT_eta}
\begin{split}
    (A^*\phi,B^*\phi) \in \partial_{(\rho,w)} J(\rho,w,\eta),\quad A\rho+Bw=0,
\end{split}
\end{equation}
for some $\phi \in \mathcal{U}$.
    \item If~\eqref{eq:rho_positive} holds and $F(t,x,\cdot)$ and $G(x,\cdot)$ are differentiable for $\rho>0$, \eqref{eq:KKT_eta} is equivalent to
\begin{equation}\label{eq:HJB_KFP_discrete}
\begin{cases}
    -(D_t \phi_{ij})^k-\nu (\Delta_h \phi^k)_{ij}+H_h(x_{ij},\eta_{ij}^{k+1},[D_h \phi^k]_{ij})=f(t_k,x_{ij},\rho^{k+1}_{ij}),\\
    w^k_{ij}=-\rho^{k+1}_{ij}\nabla_q H_h(x_{ij},\eta^{k+1}_{ij},[D_h \phi^k]),\\
    (D_t \rho_{ij})^k-\nu (\Delta_h \rho^{k+1})_{ij}+(Bw)_{ij}^k=0,\\
    \rho^0_{ij}=(\rho_0)_{ij},~ \phi^{N_T}_{ij}=g(x_{ij},\rho^{N_T}_{ij}),\quad 0\leq i,j \leq N_h-1,~ 0\leq k \leq N_T-1.
\end{cases}
\end{equation}
    \item If~\eqref{eq:rho_positive} holds and $F(t,x,\cdot)$ and $G(x,\cdot)$ are differentiable for $\rho>0$,~\eqref{eq:mfg_discrete} is equivalent to
\begin{equation}\label{eq:inc_discrete_1}
    (A^*\phi,B^*\phi) \in \mathcal{K}(\rho,w),\quad A\rho+Bw=0,
\end{equation}
where $\mathcal{K}$ is defined in~\eqref{eq:K}.
\end{enumerate}
\end{lemma}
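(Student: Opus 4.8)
The plan is to prove the three items in sequence. Item 1 is a Fenchel--Rockafellar/KKT statement for a convex program with a linear equality constraint, and items 2 and 3 then follow by computing $\partial_{(\rho,w)}J$ explicitly under the positivity hypothesis~\eqref{eq:rho_positive}.

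For item 1, I would write the program as the unconstrained minimization of $(\rho,w)\mapsto J(\rho,w,\eta)+\mathbf 1_{\ker C}(\rho,w)$, where $C\colon\mathcal M\times\mathcal W\to\mathcal U$ is the linear map $C(\rho,w)=A\rho+Bw$, so that a point is a minimizer iff $0\in\partial\bigl(J(\cdot,\cdot,\eta)+\mathbf 1_{\ker C}\bigr)(\rho,w)$. The point $(\tilde\rho,\tilde w)$ furnished by~\cite[Lemma~3.1]{silva19}, with $\tilde\rho^{k+1}_{ij}>0$, $\tilde w^k_{ij}\in\operatorname{int}(K)$ and $A\tilde\rho+B\tilde w=0$, $\tilde\rho^0=\rho_0$, belongs to $\operatorname{ri}(\operatorname{dom}J(\cdot,\cdot,\eta))\cap\ker C$, so the subdifferential sum rule applies and gives $\partial\bigl(J+\mathbf 1_{\ker C}\bigr)=\partial J+(\ker C)^{\perp}=\partial J+\operatorname{ran}(C^{*})$. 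Since $C^{*}\phi=(A^{*}\phi,B^{*}\phi)$ and $\operatorname{ran}(C^{*})$ is a subspace, $0\in\partial\bigl(J+\mathbf 1_{\ker C}\bigr)(\rho,w)$ is equivalent to the existence of $\phi\in\mathcal U$ with $(A^{*}\phi,B^{*}\phi)\in\partial_{(\rho,w)}J(\rho,w,\eta)$ together with $A\rho+Bw=0$, which is~\eqref{eq:KKT_eta}.

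For item 2, I would compute $\partial_{(\rho,w)}J(\rho,w,\eta)$ slotwise, using that $J$ is separable over grid points apart from the term $\mathbf 1_{\rho^0=\rho_0}$. Under~\eqref{eq:rho_positive} and differentiability of $F(t,x,\cdot)$, $G(x,\cdot)$ for $\rho>0$, the $F$- and $G$-contributions in the slot of $\rho^{k+1}_{ij}$ are the singletons $\{f(t_k,x_{ij},\rho^{k+1}_{ij})\}$ and, for $k+1=N_T$, $\{\frac{1}{\Delta t}g(x_{ij},\rho^{N_T}_{ij})\}$, while $\mathbf 1_{\rho^0=\rho_0}$ merely forces $\rho^0=\rho_0$ and contributes the whole space in the $\rho^0$-slot, which is why the $k=0$ row of $A^{*}\phi$ imposes no constraint. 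For the $E_h$-part at a slot with $\rho>0$ I would use the perspective-function formula of~\cite[Proposition~2.3]{combettes18perpsective} --- the same computation appearing in the proof of~\cref{crl:K_max_mon}, but with the parameter $\eta$ left free --- namely $\partial_{(\rho,w)}E_h(t,x,\rho,w,\eta)=\{(-H_h(t,x,q,\eta),-q):q\in-\partial_vL_h(t,x,w/\rho,\eta)\}$, together with the fact that $q\in-\partial_vL_h(t,x,w/\rho,\eta)$ is, by Legendre duality and differentiability of $H_h$, equivalent to $w=-\rho\nabla_qH_h(t,x,q,\eta)$. Matching the $w$-component of~\eqref{eq:KKT_eta} against $(B^{*}\phi)^k_{ij}=-[D_h\phi^k]_{ij}$ forces $q=[D_h\phi^k]_{ij}$, hence produces the second line of~\eqref{eq:HJB_KFP_discrete} and pins down $\partial_\rho E_h=\{-H_h(t_k,x_{ij},[D_h\phi^k]_{ij},\eta^{k+1}_{ij})\}$ in the paired $\rho$-slot; matching the $\rho$-components against the explicit formulas for $A^{*}\phi$ then reproduces, after relabeling $k$, the Hamilton--Jacobi equation for $0\le k\le N_T-2$ from the rows $1\le k\le N_T-1$, while the row $k=N_T$ reproduces the Hamilton--Jacobi equation at $k=N_T-1$ together with the terminal condition $\phi^{N_T}_{ij}=g(x_{ij},\rho^{N_T}_{ij})$, once that identity is read as the definition of $\phi^{N_T}$ (recall $\phi\in\mathcal U$ only stores the slices $0,\dots,N_T-1$). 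Finally $A\rho+Bw=0$ is the third line and $\rho^0=\rho_0$ the remaining boundary condition, and all of these implications are reversible.

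For item 3, I would specialize item 2 to $\eta=\rho$: since the subdifferential formulas above hold for every parameter $\eta\ge0$, the inclusion $(A^{*}\phi,B^{*}\phi)\in\mathcal K(\rho,w)$ is precisely~\eqref{eq:KKT_eta} with $\eta$ replaced throughout by $\rho$, hence by item 2 it is equivalent to~\eqref{eq:HJB_KFP_discrete} with each $\eta^{k+1}_{ij}$ replaced by $\rho^{k+1}_{ij}$; to recover~\eqref{eq:mfg_discrete} I would then eliminate $w$, substituting $w^k_{ij}=-\rho^{k+1}_{ij}\nabla_qH_h(t_k,x_{ij},[D_h\phi^k]_{ij},\rho^{k+1}_{ij})$ into $(Bw)^k_{ij}$ and expanding, which by a direct computation of the type in~\cite{achdou10,silva19} equals $-\mathcal B_{ij}(t_k,\phi^k,\rho^{k+1},\rho^{k+1})$, turning the third line into the Fokker--Planck equation of~\eqref{eq:mfg_discrete} and leaving the remaining lines and boundary conditions unchanged. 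I expect the main obstacle to be the index and boundary bookkeeping in item 2 --- keeping straight the shift between the index of $\rho^{k+1}$ in $J$ and that of the paired flux $w^k$, and verifying that the single $k=N_T$ row of~\eqref{eq:KKT_eta} encodes at once the last Hamilton--Jacobi equation and the terminal condition --- whereas the analytic input (the constraint qualification and the perspective-function/Legendre identities) is already available from~\cite[Lemma~3.1]{silva19} and~\cite[Proposition~2.3]{combettes18perpsective}.
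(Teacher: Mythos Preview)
Your proposal is correct and follows essentially the same route as the paper: Lagrangian/KKT conditions for item~1, the perspective-function subdifferential formula from \cite[Proposition~2.3]{combettes18perpsective} and Legendre duality for item~2, and specialization to $\eta=\rho$ for item~3. The only cosmetic difference is that you frame item~1 via the subdifferential sum rule with an explicit constraint qualification (using the interior point from \cite[Lemma~3.1]{silva19}), whereas the paper writes the Lagrangian saddle-point directly and simply asserts that KKT conditions are necessary and sufficient for a convex program with affine constraints; your version is slightly more careful on this point but otherwise identical.
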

\begin{proof}
\begin{enumerate}
    \item Since~\eqref{eq:J_min_eta} is a convex program with affine constraints, KKT conditions are necessary and sufficient for optimality. To obtain the KKT conditions, we introduce a Lagrange multiplier, $\phi$, for the constraint $A \rho+ Bw=0$ and find
\begin{equation*}
\begin{split}
    \inf_{A\rho + B w=0} J(\rho,w,\eta)=&\inf_{(\rho,w)\in \mathcal{M}\times \mathcal{W}} \sup_{\phi\in \mathcal{U}} \left\{J(\rho,w,\eta)-\langle A \rho+B w, \phi \rangle \right\}\\
    =&\inf_{(\rho,w)\in \mathcal{M}\times \mathcal{W}} \sup_{\phi\in \mathcal{U}} \left\{J(\rho,w,\eta)-\langle \rho, A^* \phi \rangle - \langle w, B^*\phi \rangle \right\},
\end{split}
\end{equation*}
which yields~\eqref{eq:KKT_eta}.
\item Next, assume that~\eqref{eq:rho_positive} holds. In what follows, we use that for every $\rho>0$, $\eta\geq 0$, and $w\in K$ one has that~\cite[Proposition 2.3]{combettes18perpsective} 
\begin{equation*}
    \begin{split}
        \partial_{(\rho,w)} E_h(t,x,\rho,w,\eta)
        =&\left\{\left(-H_h(t,x,q,\eta),  -q \right)~:~q\in -\partial_v L_h(t,x,\frac{w}{\rho},\eta)\right\}.
    \end{split}
\end{equation*}
Hence~\eqref{eq:KKT_eta} is equivalent to
\begin{equation}\label{eq:KKT_1}
    \begin{split}
        -[D_h \phi^k]_{ij} \in& \partial_v L_h(t_k,x_{ij},\frac{w^{k}_{ij}}{\rho^{k+1}_{ij}},\eta^{k+1}_{ij}),\\
        -(D_t \phi_{ij})^k-\nu (\Delta_h \phi^k)_{ij}=&-H_h(t_k,x_{ij},[D_h\phi^k]_{ij},\eta^{k+1}_{ij})+f(t_k,x_{ij},\rho^{k+1}_{ij}),\\
    \frac{\phi^{N_T-1}_{ij}}{\Delta t}-\nu (\Delta_h \phi^{N_T-1})_{ij}=&-H_h(t_{N_T-1},x_{ij},[D_h \phi^{N_T-1}]_{ij},\eta^{N_T}_{ij})\\
    &+f(t_{N_T-1},x_{ij},\rho^{N_T}_{ij})+\frac{g(x_{ij},\rho^{N_T}_{ij})}{\Delta t}.
    \end{split}
\end{equation}
for all $0\leq i,j \leq N_h-1$ and $0 \leq k \leq N_T-1$. The second and third equalities can be combined in a system
\begin{equation}\label{eq:HJB_discrete}
\begin{cases}
    -(D_t \phi_{ij})^k-\nu (\Delta_h \phi^k)_{ij}+H_h(t_k,x_{ij},[D_h\phi^k]_{ij},\eta^{k+1}_{ij})=f(t_k,x_{ij},\rho^{k+1}_{ij}),\\
    \phi^{N_T}_{ij}=g(x_{ij},\rho^{N_T}_{ij}),\quad \forall 0\leq i,j \leq N_h-1,~0\leq k \leq N_T-1.
\end{cases}
\end{equation}
Next, using the convex duality again, we find that the first inclusion in~\eqref{eq:KKT_1} is equivalent to the equality
\begin{equation*}
    \frac{w^k_{ij}}{\rho^{k+1}_{ij}}=-\nabla_q H_h(t_k,x_{ij},[D_h \phi^k]_{ij},\eta^{k+1}_{ij}),
\end{equation*}
for all $0\leq i,j \leq N_h-1$ and $0 \leq k \leq N_T-1$, since $H_h(t,x,q,\eta)$ is continuously differentiable in $q$ for $\eta\geq 0$. Combining this previous identity with $A\rho+Bw=0$ we obtain
\begin{equation}\label{eq:KFP_discrete}
    \begin{cases}
    (D_t \rho_{ij})^k-\nu (\Delta_h \rho^{k+1})_{ij}+(Bw)_{ij}^k=0,\\
    w^k_{ij}=-\rho^{k+1}_{ij}\nabla_q H_h(x_{ij},\eta^{k+1}_{ij},[D_h \phi^k]),\\
    \rho^0_{ij}=\rho_{0,ij},\quad 0\leq i,j \leq N_h-1,~ 0\leq k \leq N_T-1.
\end{cases}
\end{equation}
Finally, combining~\eqref{eq:HJB_discrete} and~\eqref{eq:KFP_discrete} one obtains~\eqref{eq:HJB_KFP_discrete}.
\item This follows from part 2 and the definition of $\mathcal{K}$.
\end{enumerate}
\end{proof}

\begin{corollary}\label{crl:existence}
Assume that $\nu>0$, and $F(t,x,\cdot)$ and $G(x,\cdot)$ are convex and continuous for $\rho \geq 0$ and differentiable for $\rho>0$. Then~\eqref{eq:mfg_discrete} admits a solution.    
\end{corollary}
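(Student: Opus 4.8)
The plan is to combine the fixed-point result of~\cref{lma:fixed_point} with the first-order optimality analysis of~\cref{lma:KKT}, invoking the hypothesis $\nu>0$ only to obtain strict positivity of the density through~\cref{lma:J_min_eta}. First I would apply~\cref{lma:fixed_point} with $\lambda=0$ and $y=0$, $z=0$ (the choice of $y,z$ is immaterial since $J_{0,0,0}=J$). Note that $\mathcal{P}_0\neq\emptyset$: it contains the feasible $\tilde\rho$ supplied by~\eqref{eq:tilde_rho>0}--\eqref{eq:tilde_rho_sol}, which satisfies the mass constraint in~\eqref{eq:P} because $A\tilde\rho+B\tilde w=0$ and $\tilde\rho^0=\rho_0$. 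The lemma then produces $\rho^*\in\mathcal{P}_0$ with $\rho^*\in\mathcal{S}_{0,0,0}(\rho^*)$; fix a corresponding $w^*\in\mathcal{W}$ so that $(\rho^*,w^*)$ minimizes $J(\cdot,\cdot,\rho^*)$ subject to $A\rho+Bw=0$.

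Next I would upgrade this minimizer to a solution of the optimality system for the \emph{frozen} parameter $\eta=\rho^*$. Since $\nu>0$ and $\lambda=0$,~\cref{lma:J_min_eta} part~2 applies to $(\rho^*,w^*)$ and gives $(\rho^*)^{k+1}_{ij}>0$ for all $i,j,k$; that is,~\eqref{eq:rho_positive} holds. Then~\cref{lma:KKT} part~1, with $\eta=\rho^*$, furnishes $\phi\in\mathcal{U}$ with $(A^*\phi,B^*\phi)\in\partial_{(\rho,w)}J(\rho,w,\eta)\big|_{\eta=\rho^*}$ evaluated at $(\rho^*,w^*)$, together with $A\rho^*+Bw^*=0$.

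The one conceptual point is that the fixed-point identity $\eta=\rho^*$ is precisely what collapses this frozen optimality condition into the genuinely coupled monotone inclusion: because $\rho^*\geq0$ and the subdifferential is taken at $(\rho^*,w^*)$ with the \emph{same} $\rho^*$ sitting in the frozen slot, the definition~\eqref{eq:K} gives $\partial_{(\rho,w)}J(\rho,w,\eta)\big|_{\eta=\rho^*}$ at $(\rho^*,w^*)$ equal to $\mathcal{K}(\rho^*,w^*)$. Hence $(\rho^*,w^*,\phi)$ solves~\eqref{eq:inc_discrete_1}. Finally, since~\eqref{eq:rho_positive} holds and $F(t,x,\cdot)$, $G(x,\cdot)$ are differentiable for $\rho>0$,~\cref{lma:KKT} part~3 says~\eqref{eq:inc_discrete_1} is equivalent to~\eqref{eq:mfg_discrete}; thus $(\rho^*,\phi)$ is the desired solution. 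I do not anticipate a genuine obstacle: the existence and compactness work is already carried out in~\cref{lma:J_min_eta} and~\cref{lma:fixed_point} (via Kakutani), the hypothesis $\nu>0$ is exactly what~\cref{lma:J_min_eta} part~2 converts into positivity, and what remains is the bookkeeping that identifies the frozen subdifferential with $\mathcal{K}$ and chains together the two parts of~\cref{lma:KKT}.
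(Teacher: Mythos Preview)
Your proposal is correct and follows essentially the same route as the paper: invoke \cref{lma:fixed_point} with $\lambda=0$ to obtain a fixed point $(\rho^*,w^*)$, use \cref{lma:J_min_eta} part~2 (via $\nu>0$) to get strict positivity, and then apply \cref{lma:KKT} to produce the multiplier $\phi$ and identify~\eqref{eq:mfg_discrete}. The paper's proof is terser---it cites \cref{lma:KKT} in one stroke rather than separately invoking parts~1 and~3 and the identification with $\mathcal{K}$---but the logic is the same.
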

\begin{proof}
    By~\cref{lma:fixed_point} we have that there exist $(\rho^*,w^*)$ such that
    \begin{equation*}
        (\rho^*,w^*) \in \underset{A \rho +Bw =0}{\operatorname{argmin}} J(\rho,w,\rho^*).
    \end{equation*}
    Furthermore, by~\cref{lma:J_min_eta} we have that
    \begin{equation*}
        (\rho^*)^{k+1}_{ij}>0,\quad 0\leq k \leq N_T-1,~0\leq i,j \leq N_h-1.
    \end{equation*}
    Hence,~\cref{lma:KKT} yields $\phi^*$ such that $(\rho^*,w^*,\phi^*)$ solves~\eqref{eq:mfg_discrete}.
\end{proof}

\section{A pair of primal-dual monotone inclusions}

The formulation~\eqref{eq:inc_discrete_1} is the basis of our computational method. We first recall monotone inclusion version of PDHG. Following~\cite{vu13}, assume that $\mathcal{H}_1, \mathcal{H}_2$ are Hilbert spaces, and $M:\mathcal{H}_1 \to 2^{\mathcal{H}_1}$, $N:\mathcal{H}_2 \to 2^{\mathcal{H}_2}$ are maximally monotone operators, and $C:\mathcal{H}_1 \to \mathcal{H}_2$ is a nonzero bounded linear operator. Now consider the following pair or primal-dual monotone inclusions
\begin{equation}\label{eq:inc_PDHG_gen}
    \begin{split}
        \text{find}~x\in \mathcal{H}_1~\text{s.t.}~0\in Mx+C^*(N(Cx))&\quad \text{(P)}\\
        \text{find}~y\in \mathcal{H}_2~\text{s.t.}~y\in N(Cx),~-C^*y\in Mx,~\text{for some}~x\in \mathcal{H}_1&\quad \text{(D)} 
    \end{split}
\end{equation}
When $M,N$ are subdifferentials of proper convex lower semicontinuous functions; that is, $M=\partial f_1$, $N=\partial f_2$,~\eqref{eq:inc_PDHG_gen} reduces to a convex-concave saddle point problem
\begin{equation}\label{eq:PDHG_gen}
\begin{split}
    \inf_x \left\{f_1(x)+f_2(Cx)\right\}=&\inf_x \sup_y \left\{f_1(x)+\langle C x, y \rangle_{\mathcal{H}_2} - f_2(y) \right\}\\
    =&\sup_y \left\{ -f_1^*(-C^* y)-f_2(y)\right\}.
\end{split}
\end{equation}
System~\eqref{eq:inc_PDHG_gen} can be solved by an extension of the celebrated PDHG algorithm:
\begin{equation}\label{eq:PDHG_algo_gen}
    \begin{cases}
    x^{n+1}=(I+\tau M)^{-1} \left( x^n-\tau C^* y^n\right)\\
    \Tilde{x}^{n+1}=2x^{n+1}-x^n\\
    y^{n+1}=(I+\sigma N^{-1})^{-1}\left(y^n+\sigma C \Tilde{x}^{n+1} \right)
    \end{cases}
\end{equation}
where $\tau,\sigma>0$ are such that $\tau \sigma <\frac{1}{\|C\|^2}$.

Our goal is to formulate~\eqref{eq:inc_discrete_1} in the form of~\eqref{eq:inc_PDHG_gen} and solve it via~\eqref{eq:PDHG_algo_gen}.

\begin{theorem}\label{thm:main}
Assume that $F(t,x,\cdot)$, $G(x,\cdot)$ are convex and continuous for $\rho\geq 0$, and denote by
    \begin{equation}\label{eq:MCN}
        \begin{split}
            M(\phi)=&0,~C(\phi)=(A^*\phi,B^* \phi),\quad \phi \in \mathcal{U},\\
            N(\rho,w)=&\mathcal{K}^{-1}(\rho,w),\quad (\rho,w) \in \mathcal{M}\times \mathcal{W}.
        \end{split}
    \end{equation}
    Then the following statements are true.
\begin{enumerate}
    \item $M,N$ are maximally monotone, and~\eqref{eq:inc_discrete_1} is equivalent to the pair of primal-dual inclusions
    \begin{equation}\label{eq:inc_PDHG_our}
        \begin{split}
            \text{find}~\phi~\text{s.t.}~0\in M(\phi)+C^*(N(C\phi))&\quad \text{(P)}\\
            \text{find}~(\rho,w)~\text{s.t.}~(\rho,w)\in N(C\phi),~-C^*(\rho,w) \in M(\phi),~\text{for some}~\phi&\quad \text{(D)}
        \end{split}
    \end{equation}
    \item The problem~\eqref{eq:inc_PDHG_our} admits admits at least one solution.
    \item When $\nu>0$ and $F(t,x,\cdot)$ and $G(x,\cdot)$ are differentiable for $\rho>0$,~\eqref{eq:inc_PDHG_our} is equivalent to~\eqref{eq:mfg_discrete}.
    \item When $\nu>0$ and $F(t,x,\cdot)$ and $G(x,\cdot)$ are differentiable for $\rho>0$, the algorithm
    \begin{equation}\label{eq:PDHG_algo_our}
        \begin{cases}
            \phi^{n+1}=\phi^n-\tau (A\rho^n+Bw^n)\\
            \Tilde{\phi}^{n+1}=2\phi^{n+1}-\phi^n\\
            (\rho^{n+1},w^{n+1})=(I+\sigma \mathcal{K})^{-1}\left(\rho^n+\sigma A^*\Tilde{\phi}^{n+1},w^n+\sigma B^*\Tilde{\phi}^{n+1}\right)
        \end{cases}
    \end{equation}
    yields a solution for~\eqref{eq:mfg_discrete}.
\end{enumerate}
\end{theorem}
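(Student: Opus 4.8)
The plan is to verify each of the four assertions by assembling results already established in the excerpt and invoking the convergence theory of~\cite{vu13} for the primal--dual inclusion~\eqref{eq:inc_PDHG_gen}. For part 1, maximal monotonicity of $M\equiv 0$ is immediate, and maximal monotonicity of $N=\mathcal{K}^{-1}$ follows from~\cref{crl:K_max_mon}, since the inverse of a maximally monotone operator is maximally monotone. The equivalence of~\eqref{eq:inc_discrete_1} with the pair~\eqref{eq:inc_PDHG_our} is then a matter of unwinding definitions: with $C\phi=(A^*\phi,B^*\phi)$, the primal inclusion $0\in M\phi+C^*(N(C\phi))$ reads $0\in C^*(\mathcal{K}^{-1}(A^*\phi,B^*\phi))$, i.e. there exists $(\rho,w)\in\mathcal{K}^{-1}(A^*\phi,B^*\phi)$ with $C^*(\rho,w)=A\rho+Bw=0$; and $(\rho,w)\in\mathcal{K}^{-1}(A^*\phi,B^*\phi)$ is by definition $(A^*\phi,B^*\phi)\in\mathcal{K}(\rho,w)$. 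This is exactly~\eqref{eq:inc_discrete_1}. The dual inclusion is the same system read off the variable $(\rho,w)$, so (P) and (D) share solutions; I would spell out the one-line computation $C^*(\rho,w)=A\rho+Bw$ to make this transparent.

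For part 2, existence of a solution to~\eqref{eq:inc_PDHG_our} is equivalent, by part 1, to existence of $(\phi,\rho,w)$ satisfying~\eqref{eq:inc_discrete_1}. This is provided by~\cref{lma:fixed_point} applied with $\lambda=0$: there is $\rho^*\in\mathcal{P}_0$ and $w^*$ with $(\rho^*,w^*)\in\operatorname{argmin}_{A\rho+Bw=0}J(\rho,w,\rho^*)$; then~\cref{lma:KKT}(1) furnishes a multiplier $\phi^*$ with $(A^*\phi^*,B^*\phi^*)\in\partial_{(\rho,w)}J(\rho^*,w^*,\eta)|_{\eta=\rho^*}=\mathcal{K}(\rho^*,w^*)$ and $A\rho^*+Bw^*=0$, which is~\eqref{eq:inc_discrete_1}. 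Note this step does \emph{not} need $\nu>0$ or differentiability of $F,G$; only convexity and continuity, which are the standing hypotheses of the theorem. Part 3 is then a direct citation of~\cref{lma:KKT}(3), whose hypotheses ($\nu>0$, $F,G$ differentiable for $\rho>0$) are precisely those imposed in part 3; one should remark that the additional hypotheses guarantee~\eqref{eq:rho_positive} via~\cref{lma:J_min_eta}(2), so that~\cref{lma:KKT}(3) indeed applies to the solution produced in part 2.

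For part 4, I would check that~\eqref{eq:PDHG_algo_our} is literally the instantiation of the generic scheme~\eqref{eq:PDHG_algo_gen} with $\mathcal{H}_1=\mathcal{U}$, $\mathcal{H}_2=\mathcal{M}\times\mathcal{W}$, and $M,N,C$ as in~\eqref{eq:MCN}: since $M\equiv 0$, the resolvent $(I+\tau M)^{-1}$ is the identity, so the $\phi$-update becomes $\phi^{n+1}=\phi^n-\tau C^*(\rho^n,w^n)=\phi^n-\tau(A\rho^n+Bw^n)$; the extrapolation step is unchanged; and the dual update $(I+\sigma N^{-1})^{-1}=(I+\sigma\mathcal{K})^{-1}$ acts on $(\rho^n,w^n)+\sigma C\tilde\phi^{n+1}=(\rho^n+\sigma A^*\tilde\phi^{n+1},\,w^n+\sigma B^*\tilde\phi^{n+1})$. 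Because $M,N$ are maximally monotone (part 1), $C$ is a nonzero bounded linear operator, the solution set of~\eqref{eq:inc_PDHG_our} is nonempty (part 2), and the step sizes satisfy $\tau\sigma<\|C\|^{-2}$, the convergence theorem of~\cite{vu13} guarantees that the iterates $(\phi^n,\rho^n,w^n)$ converge to a solution $(\phi^*,\rho^*,w^*)$ of~\eqref{eq:inc_PDHG_our}. By part 3, under $\nu>0$ and differentiability of $F,G$ this limit solves~\eqref{eq:mfg_discrete}.

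The main obstacle is not any single calculation but ensuring the hypotheses line up cleanly across the cited lemmas: in particular one must confirm that the $\nu>0$ / differentiability assumptions of part 4 genuinely trigger~\eqref{eq:rho_positive} for the limit point (so that~\cref{lma:KKT} is applicable there, rather than only at some intermediate iterate), and that the resolvent $(I+\sigma\mathcal{K})^{-1}$ is single-valued and everywhere-defined — which is exactly what maximal monotonicity of $\mathcal{K}$ from~\cref{crl:K_max_mon} provides via Minty's theorem, so that~\eqref{eq:PDHG_algo_our} is well defined at every step. One should also note that~\cite{vu13} yields convergence to \emph{some} solution, not uniqueness, which is all part 4 claims.
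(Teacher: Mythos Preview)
Your proposal is correct and follows essentially the same route as the paper's own proof: maximal monotonicity via \cref{crl:K_max_mon} and Minty, existence via \cref{lma:fixed_point} with $\lambda=0$ combined with \cref{lma:KKT}(1), equivalence with~\eqref{eq:mfg_discrete} via \cref{lma:KKT}(3), and convergence via~\cite{vu13}. You are in fact more careful than the paper in two places: you spell out why the limit point satisfies~\eqref{eq:rho_positive} (any solution of~\eqref{eq:inc_discrete_1} is, by \cref{lma:KKT}(1), a minimizer of~\eqref{eq:J_min_eta} with $\lambda=0$ and $\eta=\rho$, so \cref{lma:J_min_eta}(2) applies when $\nu>0$), and you note that maximal monotonicity of $\mathcal{K}$ is what makes $(I+\sigma\mathcal{K})^{-1}$ single-valued and everywhere defined so that the iteration is well posed.
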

\begin{proof}
    \begin{enumerate}
        \item Trivially, we have that $M$ is monotone, and $I+M=I$ is surjective. Hence, by Minty's theorem~\cite[Theorem 3.5.8]{aubin90set} $M$ is maximally monotone. Furthermore, by~\cref{crl:K_max_mon} we have that $\mathcal{K}$ is maximally monotone, and so $N=\mathcal{K}^{-1}$ is also maximally monotone~\cite[Section 3.5.2]{aubin90set}.

        Furthermore, the equivalence of~\eqref{eq:inc_discrete_1} and~\eqref{eq:inc_PDHG_our} follows directly from the definitions of $N$ and $C$ since
        \begin{equation*}
            (\rho,w) \in N(C(\phi)) \Longleftrightarrow (A^*\phi,B^*\phi) \in \mathcal{K}(\rho,w).
        \end{equation*}
        \item By~\cref{lma:fixed_point} we have that there exists $(\rho^*,w^*)$ such that
        \begin{equation*}
            (\rho^*,w^*) \in \underset{A\rho +Bw = 0}{\operatorname{argmin}} J(\rho,w,\rho^*).
        \end{equation*}
        Hence, by part 1 in~\cref{lma:KKT} we have that there exists $\phi^*\in \mathcal{U}$ such that
        \begin{equation*}
            (A^* \phi^*,B^* \phi^*) \in \partial_{(\rho,w)} J(\rho,w,\eta)\Big|_{\eta=\rho^*}=\mathcal{K}(\rho^*,w^*),\quad A\rho^*+Bw^*=0,
        \end{equation*}
        which is precisely~\eqref{eq:inc_discrete_1}. Thus, we conclude by part 1 above.
        \item This follows from part 3 in~\cref{lma:KKT} and part 1 above.
        \item This follows from~\cite{vu13}, and part 3 above.
    \end{enumerate}
\end{proof}

\subsection{Grid-size independent time-steps}
\label{sec:grid-indep}
The convergence of~\eqref{eq:PDHG_algo_our} might be very slow for fine grids. The reason is that $\tau,\sigma>0$ must satisfy is $\tau \sigma <\frac{1}{\|C\|^2}$. But $C(\phi)=(A^*\phi,B^*\phi)$ is a discretization of differential operator $(-\partial_t \phi - \nu \Delta \phi, -\nabla \phi)$ that is unbounded. Thus, $\|C\| \to \infty$ in the continuous limit; that is, when the grid gets finer. Consequently, $\tau,\sigma>0$ should be very small slowing down the convergence.

To remedy this previous issue, we will choose the norm in $\mathcal{U}$ in such a way that $\|C\|=1$, and so $\tau ,\sigma>0$ should just satisfy $\tau \sigma<1$. This procedure will be equivalent to a suitable preconditioning of the $\phi$ update in~\eqref{eq:PDHG_algo_our}.

Consider the following inner product in $\mathcal{U}$:
\begin{equation*}
\begin{split}
    \langle \phi_1, \phi_2 \rangle_\star=&\langle C \phi_1, C \phi_2 \rangle=\langle C^* C \phi_1,\phi_2 \rangle
    = \langle (A A^* +B B^* )\phi_1, \phi_2 \rangle,\quad \forall \phi_1,\phi_2 \in \mathcal{U},
\end{split}
\end{equation*}
where the inner products without $\star$ are the standard ones. Note that $\langle \cdot , \cdot \rangle_\star$ is bilinear, and
\begin{equation*}
    0=\|\phi\|_\star^2=\|C \phi\|^2=\|A^*\phi\|^2+\|B^*\phi\|^2
\end{equation*}
yields $\phi=0$. Hence, $\langle \cdot, \cdot \rangle_{\star}$ is indeed an inner product on $\mathcal{U}$.

Now define $M,C,N$ as before in~\eqref{eq:MCN} but let us calculate the adjoint of $C:\mathcal{U} \to \mathcal{M} \times \mathcal{W}$ with respect to $\langle\cdot, \cdot, \rangle_{\star}$. We have that
\begin{equation*}
    \langle C^*(\rho,w), \phi \rangle=\langle (\rho,w), C \phi \rangle= \langle C_\star^* (\rho,w), \phi \rangle_{\star}= \langle C^* C C_\star^* (\rho,w), \phi \rangle
\end{equation*}
for all $\rho,w,\phi$, and so
\begin{equation*}
    C^*_\star=(C^*C)^{-1}C^*=C^\dagger
\end{equation*}
where $C^\dagger$ is the Moore-Penrose pseudoinverse. Hence,~\eqref{eq:inc_discrete_1} has another equivalent formulation as a primal dual pair of monotone inclusions
\begin{equation}\label{eq:inc_PDHG_our_dagger}
    \begin{split}
        \text{find}~\phi~\text{s.t.}~M(\phi)\in C^\dagger(N(C\phi))&\quad \text{(P)}\\
        \text{find}~(\rho,w)~\text{s.t.}~(\rho,w)\in N(C\phi),~-C^\dagger(\rho,w)\in M(\phi),~\text{for some}~\phi&\quad \text{(D)} 
    \end{split}
\end{equation}
Accordingly, we have the following PDHG algorithm
\begin{equation}\label{eq:PDHG_algo_our_dagger}
    \begin{cases}
    \phi^{n+1}=\phi^n-\tau (AA^*+BB^*)^{-1}(A\rho^n+Bw^n)\\
    \Tilde{\phi}^{n+1}=2\phi^{n+1}-\phi^n\\
    (\rho^{n+1},w^{n+1})=(I+\sigma \mathcal{K})^{-1}\left(\rho^n+\sigma A^*\Tilde{\phi}^{n+1},w^n+\sigma B^*\Tilde{\phi}^{n+1}\right)
    \end{cases}
\end{equation}

Note that the only difference between~\eqref{eq:PDHG_algo_our_dagger} and~\eqref{eq:PDHG_algo_our} is the $\phi$ update step, which is preconditioned with $C^*C=AA^*+BB^*$. The advantage is that
\begin{equation*}
    \|C\|_\star=\sup_{\|\phi\|_\star \leq 1} \|C\phi \|=\sup_{\|C\phi\| \leq 1} \|C\phi \| =1,
\end{equation*}
and so $\tau,\sigma>0$ in~\eqref{eq:inc_PDHG_our_dagger} must satisfy $\tau\sigma <1$. Therefore, the magnitude of $\tau,\sigma$ and the convergence rate of the algorithm are independent of the grid.

\subsection{Further remarks on the algorithm}

Note that $C^*C=AA^*+BB^*$ is the discretization of the continuous operator
\begin{equation*}
    (-\partial_t-\nu \Delta)(\partial_t-\nu \Delta)+\nabla \cdot (-\nabla)=-\partial_t^2-\Delta +\nu^2 \Delta^2 \geq 0
\end{equation*}
Hence, the $\phi$ update in~\eqref{eq:inc_PDHG_our_dagger} invloves a solution of fourth-order space-time PDE that can be done via FFT. 

Furthermore, the $(\rho,w)$ update in~\eqref{eq:PDHG_algo_our_dagger} decouples to $3$-dimensional, or $(d+1)$-dimensional for $d\neq 2$, monotone inclusion problems at grid points that can be solved efficiently by standard algorithms. Often, $w$ can be eliminated leading to one-dimensional problems with respect to $\rho$ at the grid points. This happens, for example, when solving optimal transport or potential MFG problems~\cite{BenamouBrenier2000,bencar'15,bencarsan'17,silva18,silva19} or the congestion model~\cite{achdou18} considered here.

When the convex duals or $F(t,x,\cdot)$ and $G(x,\cdot)$ are readily available, we can simply the $(\rho,w)$ updates by a further splitting. To this end, assume that $F(t,x,\cdot)$ and $G(x,\cdot)$ are convex and continuous in $\rho\geq 0$, and
\begin{equation*}
    F(t,x,\rho)=\infty,\quad G(x,\rho)=\infty,\quad \forall \rho<0,
\end{equation*}
and consider
\begin{equation}\label{eq:F*G*}
    F^*(t,x,a)=\sup_{\rho \geq 0}\left\{ a\cdot \rho-F(t,x,\rho) \right\},\quad G^*(x,b)=\sup_{\rho \geq 0}\left\{ b\cdot \rho-G(x,\rho) \right\},
\end{equation}
which are proper convex lower semicontinuous functions with respect to $a,b$, respectively. Next, consider 
\begin{equation}\label{eq:calFG}
\begin{split}
    \mathcal{F}^*(a)=\sum\limits_{\substack{0\leq i,j \leq N_h-1\\ 0\leq k\leq N_T-1}} F^*(t_k,x_{ij},a_{ij}^{k}),\quad \mathcal{G}^*(b)=\sum\limits_{0\leq i,j \leq N_h-1} \frac{G^*(x_{ij},b_{ij} \Delta t )}{\Delta t}.
\end{split}
\end{equation}
Note that $\mathcal{F}^*,\mathcal{G}^*$ are also proper convex lower semicontinuous functions. Furthermore, denote by
\begin{equation}\label{eq:hatJ}
\begin{split}
    \widetilde{J}(\rho,w,\eta)=&\sum\limits_{\substack{0\leq i,j \leq N_h-1\\ 0\leq k\leq N_T-1}} E_h \left(t_k, x_{ij},w^k_{ij},\rho^{k+1}_{ij},\eta^{k+1}_{ij}\right)+\mathbf{1}_{\rho^0=\rho_0},
\end{split}
\end{equation}
and consider
\begin{equation*}
    \begin{split}
    \widetilde{\mathcal{K}}(\rho,w)=&\partial_{(\rho,w)} \widetilde{J}(\rho,w,\eta)\Big|_{\eta=\rho},\quad (\rho,w) \in \mathcal{M} \times \mathcal{W},\\
    \widetilde{N}(\rho,w)=&\widetilde{\mathcal{K}}^{-1}(\rho,w), \quad (\rho,w) \in \mathcal{M} \times \mathcal{W},\\
    \widetilde{M}(a,b,\phi)=&\left(\partial_a \mathcal{F}^*(a),\partial_b \mathcal{G}^*(b),~0\right),\quad (a,b,\phi)\in \mathcal{U} \times \mathbb{R}^{N_h\times N_h}\times  \mathcal{U}.
    \end{split}
\end{equation*}
Finally, we define $\widetilde{C}:\mathcal{U} \times \mathbb{R}^{N_h\times N_h} \times \mathcal{U} \to \mathcal{M} \times \mathcal{W}$ as
\begin{equation*}
    \widetilde{C}(a,b,\phi)=\left(A^*\phi-(0,a^{-(N_T-1)},a^{N_T-1}+b),B^*\phi\right),
\end{equation*}
where
\begin{equation*}
    a^k=(a^k_{ij})_{i,j=0}^{N_h-1},\quad a^{-k}=(a^l_{ij})_{i,j=0,l\neq k}^{N_h-1}.
\end{equation*}
Hence, we have that $\widetilde{C}^*:\mathcal{M} \times \mathcal{W} \to \mathcal{U} \times \mathbb{R}^{N_h \times N_h}\times \mathcal{U}$ is given by
\begin{equation*}
    \widetilde{C}^*(\rho,w)=(-\rho^{-0},-\rho^{N_T},A\rho+Bw).
\end{equation*}

Then~\eqref{eq:inc_discrete_1} can be written as
\begin{equation}\label{eq:inc_PDHG_our_simple}
    \begin{split}
        \text{find}~(a,b,\phi)~\text{s.t.}~0\in \widetilde{M}(a,b,\phi)+\widetilde{C}^*(\widetilde{N}(\widetilde{C}(a,b,\phi)))&\quad \text{(P)}\\
        \text{find}~(\rho,w)~\text{s.t.}~(\rho,w)\in \widetilde{N}(\widetilde{C}(a,b,\phi)),-\widetilde{C}^*(\rho,w)\in \widetilde{M}(a,b,\phi),~\text{for some}~(a,b,\phi)&\quad \text{(D)} 
    \end{split}
\end{equation}
Consequently, the PDHG algorithm is
\begin{equation}\label{eq:PDHG_algo_our_simple}
    \begin{cases}
    a^{n+1}=(I+\tau \partial_a \mathcal{F}^*)^{-1}\left(a^n+\tau (\rho^n)^{-0}\right)\\
    b^{n+1}=(I+\tau \partial_b \mathcal{G}^*)^{-1}\left(b^n+\tau (\rho^n)^{N_T}\right)\\
    \phi^{n+1}=\phi^n-\tau (A\rho^n+Bw^n)\\
    (\Tilde{a}^{n+1},\Tilde{b}^{n+1},\Tilde{\phi}^{n+1})=2(a^{n+1},b^{n+1},\phi^{n+1})-(a^{n},b^{n},\phi^{n})\\
    (\rho^{n+1},w^{n+1})=(I+\sigma \widetilde{\mathcal{K}})^{-1}\left((\rho^n,w^n)+\sigma \widetilde{C}(\Tilde{a}^{n+1},\Tilde{b}^{n+1},\Tilde{\phi}^{n+1})\right)
    \end{cases}
\end{equation}
The advantage of~\eqref{eq:PDHG_algo_our_simple} over~\eqref{eq:PDHG_algo_our} is that the $(\rho,w)$ update simplifies due to a simpler function $\widetilde{\mathcal{K}}$ at the expense of often simple proximal updates of dual variables $a,b$. As before, except the $\phi$ update all other updates are decoupled at the grid points.

Additionally, we can obtain a grid independent convergence rate by introducing an inner product
\begin{equation*}
    \langle (a_1,b_1,\phi_1), (a_2,b_2,\phi_2) \rangle_\star=\langle a_1, a_2 \rangle+\langle b_1,b_2\rangle +\langle (AA^*+BB^*)\phi_1,\phi_2 \rangle,
\end{equation*}
which leads
\begin{equation*}
    \widetilde{C}^*_\star=\left( -\rho^{-0},-\rho^{N_T},(AA^*+BB^*)^{-1}(A\rho+Bw)\right)
\end{equation*}
and a preconditioned version of~\eqref{eq:PDHG_algo_our_simple}:
\begin{equation}\label{eq:PDHG_algo_our_dagger_simple}
    \begin{cases}
    a^{n+1}=(I+\tau \partial_a \mathcal{F}^*)^{-1}\left(a^n+\tau (\rho^n)^{-0}\right)\\
    b^{n+1}=(I+\tau \partial_b \mathcal{G}^*)^{-1}\left(b^n+\tau (\rho^n)^{N_T}\right)\\
    \phi^{n+1}=\phi^n-\tau (AA^*+BB^*)^{-1} (A\rho^n+Bw^n)\\
    (\Tilde{a}^{n+1},\Tilde{b}^{n+1},\Tilde{\phi}^{n+1})=2(a^{n+1},b^{n+1},\phi^{n+1})-(a^{n},b^{n},\phi^{n})\\
    (\rho^{n+1},w^{n+1})=(I+\sigma \widetilde{\mathcal{K}})^{-1}\left((\rho^n,w^n)+\sigma \widetilde{C}(\Tilde{a}^{n+1},\Tilde{b}^{n+1},\Tilde{\phi}^{n+1})\right)
    \end{cases}
\end{equation}

\section{Numerical Experiments}
In this section, we present two numerical experiments to demonstrate the effectiveness of the proposed numerical scheme and its robustness with respect to singular limits $\nu=0,\epsilon=0$. We conduct the following experiments on a $2$-dimensional torus $\mathbb{T}^2 = [0,1] \times [0,1]$ and a time interval $[0,T]=[0,1]$.

\subsection{Convergence of the algorithm}\label{sec:num_eg1}
In this example, we numerically verify the grid-independent time steps as discussed in~\cref{sec:grid-indep}.
We apply the proposed monotone PDHG approach to the following mean-field game system with congestion with
$\alpha = 1, \beta = 2$ in~\cref{eq:H}, $\epsilon = 0.1, \nu = 0.1$. The initial distribution is given as follows 
\begin{align*}
    &\rho_0(x) = \hat{\rho}_{c(0.25, 0.25)}(x)+ \hat{\rho}_{c(0.75, 0.25)}(x) + \hat{\rho}_{c(0.25, 0.75)}(x) + \hat{\rho}_{c(0.75, 0.75)}(x),\\
    &\hat{\rho}_{c(a_1,a_2)}(x)= \hat{c} \exp\left( -100\left(( x_1-a_1)^2 + ( x_2-a_2)^2\right)\right),
\end{align*}
where $\hat{c}>0$ is a constant scalar such that $\int \rho_0(x) dx = 1$.
For the mean-field game system, the terminal condition $g$ is given by
\begin{align*}
    g(x) = 0.1 \left( \sin\left(\pi \left(x_1-0.5\right)\right)\right)^2 +  0.1 \left( \sin\left(2\pi \left(x_2-0.25\right)\right)\right)^2.
\end{align*}
In~\cref{fig:example1}, we show the numerical results with discretized grid $N_h =40, N_t = 32$. We observe that the density moves towards points $(0.5,0.25)$ and $(0.5,0.75)$, which are the minima of the terminal cost function $g$. We also see the diffusion during the time evolution that is induced by the viscosity parameter $\nu = 0.1$. 

\begin{figure}
    \centering
    \includegraphics[width = 1.0\textwidth,trim=115 75 120 100 , clip]{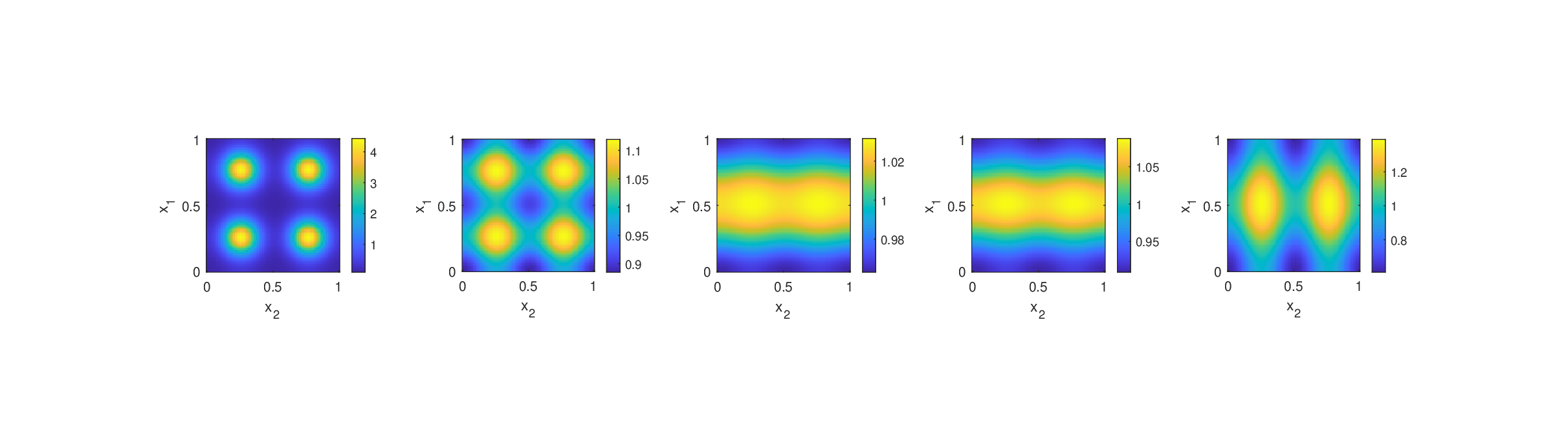}
    \caption{This figure shows the density evolution of $\rho(x,t)$ in the MFG model with congestion parameters $\alpha = 1, \beta = 2, \epsilon = 0.1$ and viscosity $\nu=0.1$, as detailed in~\cref{sec:num_eg1}. Moving from left to right, at time points $t = 0, 0.25, 0.5, 0.75, 1$, the density starts in the form of $4$ Gaussians and eventually concentrates at points $(0.5, 0.25)$ and $(0.5, 0.75)$.}
    \label{fig:example1}
\end{figure}

To show that the convergence rate is independent of the grid-size, we set error tolerance $\varepsilon_1 = 10^{-3}, \varepsilon_2 = 10^{-4}, \varepsilon_3 = 10^{-5}$, and compute the number of iterations the algorithm needs to guarantee that the discretized continuity equation~\cref{eq:KFP_discrete} satisfies
\[
\text{Err}^n := \| \text{discretized continuity equation at $n$-th iteration } \|_{L^2} < \varepsilon.
\]
Here, the norm $\| \cdot\|_{L^2}$ has been properly scaled with the spatial and temporal discretization $\Delta h, \Delta t$. We perform computations for various grid-sizes and summarize the results in~\cref{tab:example1_convergence}.

We fixed the optimization step-sizes $\sigma = \tau  = 0.5$. Although preconditioning increases the computational cost per iteration, it is important to emphasize that it ultimately enhances the computational efficiency. To this end, we consider a scenario without preconditioning with $N_h = 20$ and $N_t = 16$, and the algorithm needs $59723$ iterations (taking $508.73$ seconds) to achieve a residual error smaller than $\varepsilon_1 = 10^{-3}$, and $110093$ iterations (taking $932.74$ seconds) to reach a residual error smaller than $\varepsilon_2 = 10^{-4}$. This example highlights how suitable choices of norms and inner products in the PDHG improves the overall efficiency of the proposed algorithm.

\begin{figure}
        \centering
  \begin{tabular}{|c|c|c|c|c|}
     \hline
        $N_h$ & $N_t$ & $\varepsilon_1 = 10^{-3}$ & $\varepsilon_2 = 10^{-4}$ & 
        $\varepsilon_3 = 10^{-5}$ \\
         \hline
        20 & 16 & 107 (1.34 s) & 137 (1.69 s) &  196 (2.37 s)\\
         \hline
        32 & 20 & 117 (4.67 s) &  147 (5.83 s) & 197 (7.77 s)\\
         \hline
        40 & 32& 117 (11.19 s) & 157 (14.62 s) & 197 (18.57 s) \\
         \hline
    \end{tabular}
        \caption{We record the number of iterations our algorithm requires to reach specific error thresholds, $\varepsilon$, across various mesh densities, along with the computational time needed (in seconds). Since the algorithm is not optimized for parallel execution, the computational time increases with the refinement of the mesh.}
        \label{tab:example1_convergence}
\end{figure}

\subsection{Viscosity Effect}\label{subsec:viscosity=0}

In this section, we conduct experiments to demonstrate the impact of the viscosity parameter $\nu$ in the MFG model with congestion. The parameters $\alpha,\beta,\epsilon$ are the same as in the previous section, and we compute the solutions for $\nu=0,0.02,0.1$. The computations demonstrate that $\nu$ significantly affects the solution, as depicted in~\cref{fig:example2}. A stronger diffusion results in a more widespread solution. More importantly, the algorithm is robust with respect to small values of $\nu$. This phenomenon is explained by the variational nature of the algorithm that can handle singular problems.
\begin{figure}
    \centering
    \includegraphics[width = 1.0\textwidth,trim=60 35 70 55, clip]{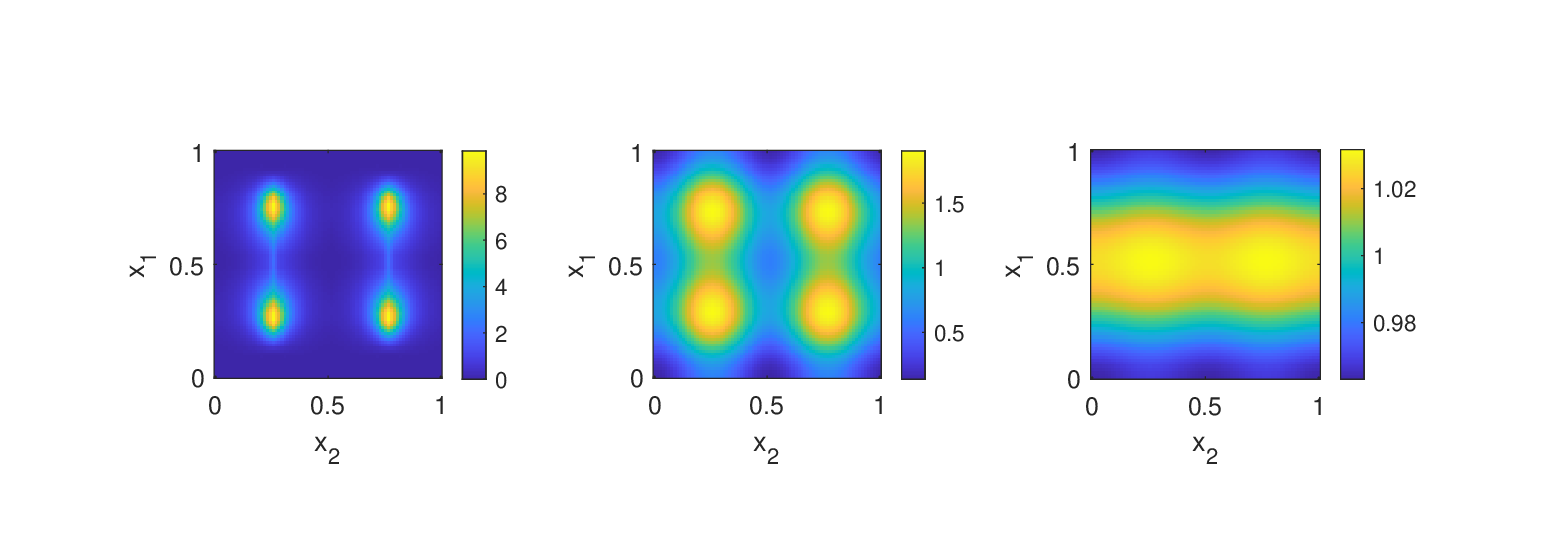}
        \includegraphics[width = 1.0\textwidth,trim= 60 35 70 55 , clip]{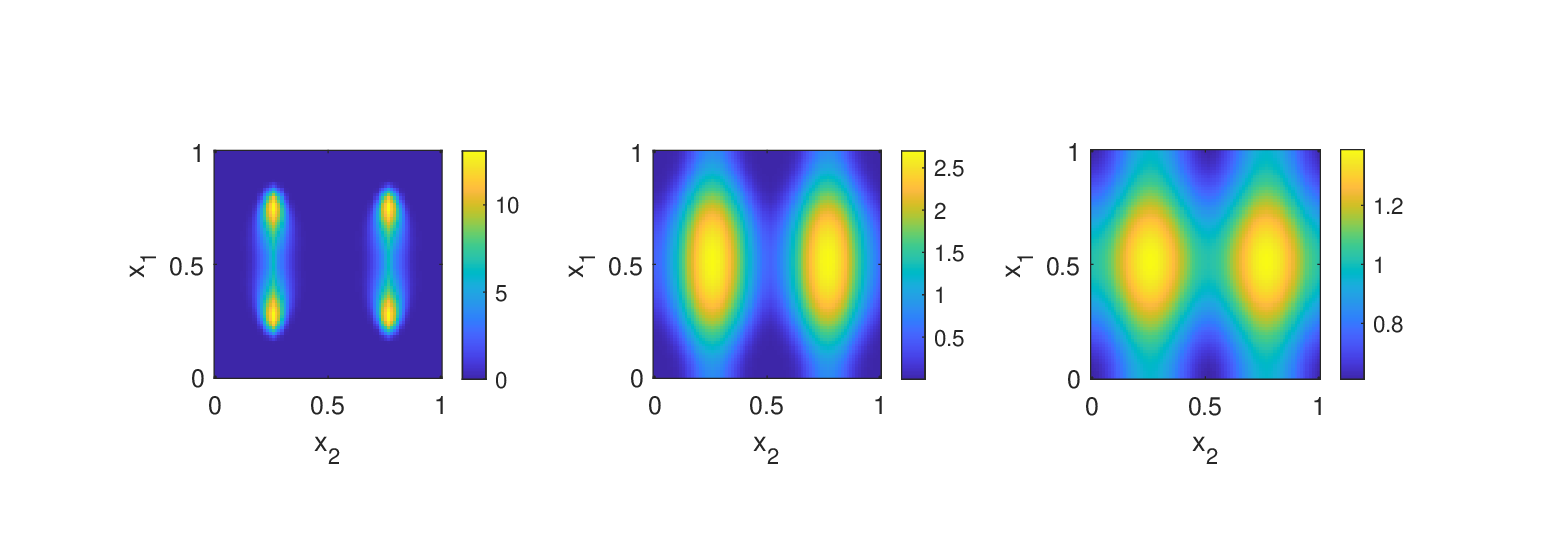}
    \caption{This figure shows the numerical results of density evolution of congestion model with $\nu = 0, 0.02, 0.1 $ (from left to right): $\rho(0.5,x)$ (top) and $\rho(1,x)$ (bottom).}
    \label{fig:example2}
\end{figure}

\subsection{Congestion Effect}\label{subsec:cong=0}

In this section, we explore scenarios where $\nu = 0$ and the congestion parameter is small $\epsilon$. The goal of these experiments is to study the robustness of the algorithm with respect to further singularity introduced by small values of $\epsilon$. \cref{fig:example3} illustrates that with an increasing congestion coefficient $\epsilon$, the density tends to exhibit reduced movement. Consequently, when $\epsilon = 5$, the density undergoes only slight deformation. Additionally, as $\epsilon \rightarrow 0$, the solution consistently demonstrates behavior akin to the leftmost case with $\epsilon = 0$. As before, the algorithm demonstrates robust performance in all cases.

\begin{figure}
    \centering
    \includegraphics[width = 1.0\textwidth,trim=160 35 150 55, clip]{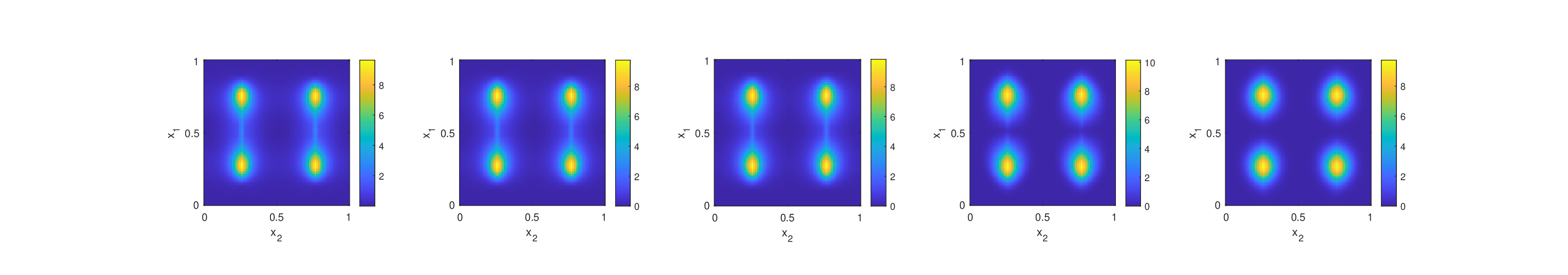}
        \includegraphics[width = 1.0\textwidth,trim= 160 35 150 45, clip]{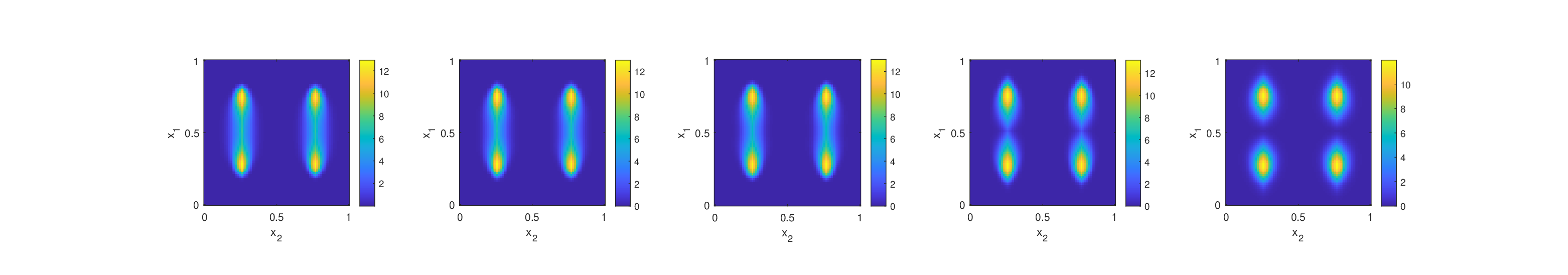}
    \caption{This figure shows the numerical results of density evolution of congestion model with $\epsilon = 0, 0.02, 0.1, 1, 5 $ (from left to right): $\rho(0.5,x)$ (top) and $\rho(1,x)$ (bottom).}
    \label{fig:example3}
\end{figure}




\bibliographystyle{siamplain}

\end{document}